\documentclass{amsart}

\pdfoutput=1
\usepackage[utf8]{inputenc}
\usepackage{lmodern}
\usepackage[T1]{fontenc}
\usepackage{mathtools}
\allowdisplaybreaks
\usepackage{amssymb} 
\usepackage{amsthm}
\usepackage[usenames,dvipsnames,svgnames,table]{xcolor}
\usepackage{xspace}
\usepackage{embrac} 
\ChangeEmph{(}[-.01em,.04em]{)}[.04em,-.05em]
\usepackage{microtype}
\usepackage{textcase}
\usepackage{enumitem}
\usepackage[acronym]{glossaries}
\usepackage{graphicx}
\usepackage{float}
\usepackage{booktabs}
\usepackage{csquotes}
\usepackage{multirow}
\usepackage{tikz}
\usepackage{pgfplots}
\pgfplotsset{compat=1.18}
\usepackage{subcaption}
\usetikzlibrary{angles,quotes,calc,positioning,shapes,matrix,arrows}
\usepackage{tikz-cd}
\usetikzlibrary{}
\usepackage{tkz-euclide}
\usepackage{etoolbox}
\usepackage{url}
\usepackage[hyperindex=true, colorlinks=true, linkcolor=Maroon, citecolor=DarkGreen, urlcolor=NavyBlue, breaklinks=true,linktocpage=true,linktoc=all]{hyperref}
\usepackage[english,capitalise,noabbrev]{cleveref}
\crefdefaultlabelformat{#2\textup{#1}#3}
\crefname{equation}{equation}{equations}
\usepackage[logical-quotes,backrefs,initials]{amsrefs}

\newcommand{\R}{\mathbb{R}}

\renewcommand{\S}{\mathbb{S}}

\newcommand{\contained}{\subset}

\newcommand{\union}{\cup}

\newcommand{\suchthat}{\, : \,}
\newcommand{\st}{\suchthat}

\newcommand{\defined}{\coloneqq}

\newcommand{\tendsto}{\rightarrow}

\newcommand{\from}{\colon}

\newcommand{\vol}{\operatorname{vol}}

\DeclarePairedDelimiterX\norm[1]\lVert\rVert{\ifblank{#1}{\:\cdot\:}{#1}}
\DeclarePairedDelimiterX\abs[1]\lvert\rvert{\ifblank{#1}{\:\cdot\:}{#1}}
\DeclarePairedDelimiterX\covolume[1]\lvert\rvert{\ifblank{#1}{\:\cdot\:}{#1}}
\DeclarePairedDelimiterX\set[1]{\{}{\}}{\ifblank{#1}{\: \:}{#1}}
\DeclarePairedDelimiterX\innerprod[2]\langle\rangle
{\ifblank{#1#2}{\,\cdot,\cdot\,}
	{#1,#2}}
\DeclarePairedDelimiterX\floor[1]\lfloor\rfloor{\ifblank{#1}{\:\cdot\:}{#1}}
\DeclarePairedDelimiterXPP\expectation[2]{\ifblank{#1}{\mathbb{E}}{\mathbb{E}_{#1}}}\lbrack\rbrack{}{\ifblank{#2}{\:\cdot\:}{#2}}

\newcommand{\dd}{\mathop{}\!\mathrm{d}}

\newcommand{\Energy}{E}
\newcommand{\minEnergy}{\mathcal{E}}
\newcommand{\Wiener}{W}

\renewcommand{\digamma}{\psi}

\theoremstyle{plain}
\newtheorem{theorem}{Theorem}[section]

\newtheorem{conjecture}[theorem]{Conjecture}
\newtheorem{corollary}[theorem]{Corollary}

\newtheorem{lemma}[theorem]{Lemma}
\newtheorem{proposition}[theorem]{Proposition}

\theoremstyle{definition}

\newtheorem{definition}[theorem]{Definition}

\theoremstyle{remark}

\begin{document}

	\title{Riesz 2-energy of the Diamond ensemble}
	
	\author{Pedro R. López-Gómez}
	\address{Pedro R. López-Gómez: Departamento de Matemáticas, CUNEF Universidad,  C/\,Almansa, 101, 28040 Madrid, Spain}
	\email{pedro.lopezgomez@cunef.edu}
	
	\date{\today{}}
	
	\thanks{The author has been supported by grant PID2020-113887GB-I00 funded by MCIN/AEI/10.13039/501100011033 and by grant PRE2021-097772 funded by MCIN/AEI/10.13039/501100011033 and by ``ESF Investing in your future''.}
	
	\subjclass[2020]{31C12, 31C20, 41A60, 52C35, 60G55}
	
	\keywords{Riesz 2-energy, minimal energy points on the sphere, Diamond ensemble, point distributions}

	\begin{abstract}
		We study the Riesz $2$-energy of point configurations on the two-dimensional sphere arising from the Diamond ensemble, a construction of well-distributed points introduced by Beltrán and Etayo in 2020. For this family of point sets, we derive an explicit formula for the expected $2$-energy, valid for general choices of the parameters. Using this formula, we analyze a specific realization of the Diamond ensemble and obtain the asymptotic expansion of its expected $2$-energy. As a consequence, we establish a new upper bound for the minimal Riesz $2$-energy on the sphere, improving upon all previously known upper bounds. In particular, our result yields, for the first time, a negative coefficient in the quadratic term of the asymptotic expansion, bringing the upper bound significantly closer to the conjectured constant.
	\end{abstract}
	
	\maketitle
	

\section{Introduction and main results}

The problem of distributing points uniformly on the sphere is a classical question with connections to potential theory, approximation theory, harmonic analysis, and probability theory; see \cite{SaffKuijlaars1997} for a nice introduction to the topic. Among the various criteria commonly used to address this problem, one that has attracted considerable attention in recent decades is to seek configurations that minimize a certain pairwise interaction energy. In this sense, Riesz $s$-energies play a central role; see, for example, the monograph \cite{BorodachovHardinSaff2019} or the survey \cite{BrauchartGrabner2015}. Given a collection $\omega_N=\set{x_1,\dotsc,x_N}\contained\S^d$ of $N$ points on the $d$-dimensional sphere, the \emph{Riesz $s$-energy} of $\omega_N$ is defined as
\begin{equation*}
	\Energy_s(\omega_N)\defined\sum_{i\neq j}K_s(x_i,x_j),
\end{equation*}
where $K_s$ denotes the \emph{Riesz $s$-kernel}, given by
\begin{equation*}
	K_s(x,y)\defined\begin{cases}
	\norm{x-y}^{-s}	& s>0,\\
	\log\norm{x-y}^{-1}	& s=\log,
	\end{cases}\qquad x,y\in\S^d.
\end{equation*}
Here, as usual, we are considering the logarithmic energy as the case $s=\log$ of the Riesz energy. The \emph{minimal Riesz $s$-energy of $N$ points} is then defined as
\begin{equation*}
	\minEnergy_s(\S^d,N)\defined\min\set{\Energy_s(\omega_N)\st \omega_N\contained\S^d}.
\end{equation*}
Minimizers of the Riesz $s$-energy on the sphere are known to be asymptotically uniformly distributed for all $s>0$ (in fact, this property holds for all $s>-2$, but we restrict our attention here to the repulsive regime $s>0$) and $s=\log$; see \cite[Theorem 4.4.9 and Proposition 4.6.4]{BorodachovHardinSaff2019}. Nevertheless, the asymptotic behavior of the minimal Riesz energies as the number of points tends to infinity depends strongly on the value of $s$. In this regard, one can clearly distinguish two different regimes. On the one hand, there is the \emph{singular} or \emph{potential-theoretic} regime, corresponding to $0<s<d$ and $s=\log$. For these values of $s$, the Riesz kernel is integrable and the asymptotic expansion is known to take the form
\begin{equation}\label{eq:leading-term-s-energy-singular}
	\minEnergy_s(\S^d,N)=\Wiener_s(\S^d)N^2+o(N^2),
\end{equation}
where $\Wiener_s(\S^d)$ is the \emph{Wiener constant} of the sphere, defined as the infimum of the \emph{continuous $s$-energy}
\begin{equation*}
	I_s[\mu]\defined\iint_{\S^d\times\S^d}K_s(x,y)\dd\mu(x)\dd\mu(y)
\end{equation*}
over all Borel probability measures supported on $\S^d$ (see \cite[Theorem 4.2.2]{BorodachovHardinSaff2019}). It is well known that this continuous energy is uniquely minimized by the uniform measure $\sigma$; see \cite[Proposition 4.6.4]{BorodachovHardinSaff2019}.

On the other hand, when $s\geq d$ the Riesz kernel is no longer integrable, and one has $I_s[\mu]=+\infty$ for every probability measure supported on $\S^d$. In this \emph{hypersingular} regime, short-range interactions become increasingly relevant as the parameter $s$ grows, and the order of growth of the minimal $s$-energy changes from $N^2$ to $N^2\log N$ for the critical value $s=d$, and then to $N^{1+s/d}$ for $s>d$. More specifically, thanks to the works \cites{KuijlaarsSaff1998,GotzSaff2001,HardinSaff2005} we know that the asymptotic behavior of the minimal $s$-energies in this case is the following:
\begin{align}
	\minEnergy_d(\S^d,N)&=\frac{\vol(B^d)}{\vol(\S^d)} N^2\log N+o(N^2\log N)\label{eq:leading-term-d-energy},\\
	\minEnergy_s(\S^d,N)&=\frac{C_{s,d}}{\vol(\S^d)^{s/d}} N^{1+s/d}+o(N^{1+s/d}),\qquad s>d,\label{eq:leading-term-s-energy-hypersingular}
\end{align}
where $B^d$ is the unit ball in $\R^d$ and $C_{s,d}$ is a finite constant depending only on $s$ and $d$. The explicit value of the constant $C_{s,d}$ is not known except for $d=1$, $8$, and $24$. It should be noted that these results for the hypersingular regime, which may be viewed as a particular instance of the celebrated poppy-seed bagel theorem of Hardin and Saff \cite{HardinSaff2005}, actually hold in a much broader setting; see \cite[Chapters 8 and 9]{BorodachovHardinSaff2019} for a more general discussion. The explicit expressions for the next-order terms in \cref{eq:leading-term-s-energy-singular,eq:leading-term-d-energy,eq:leading-term-s-energy-hypersingular} are not known in general, except for the logarithmic case, in which it is known to be $-N\log(N)/d$ (see \cite[Theorem 6.4.6]{BorodachovHardinSaff2019}). Therefore, improving our understanding of these terms becomes a central problem. In this paper, we focus on the case $s=2$ on the two-dimensional sphere. For conjectures concerning the next-order terms for this and other values of $s$, we refer the reader to \cite[Section  6.6]{BorodachovHardinSaff2019}.

\subsection{Riesz $2$-energy on the sphere. Conjecture and state of the art}

As mentioned, the case $s=2$ on $\S^2$ is of particular interest because, on this space, it marks the transition between the singular regime ($s<2$) and the hypersingular regime ($s> 2$). From \eqref{eq:leading-term-d-energy}, we have
\begin{equation*}
	\minEnergy_2(\S^2,N)=\frac{1}{4} N^2\log N+o(N^2\log N).
\end{equation*}
Regarding the next-order terms for this energy, Brauchart, Hardin, and Saff proposed in \cite{BrauchartHardinSaff2012} the following conjecture for the asymptotic expansion of the minimal $2$-energy on the sphere; see \cite[Section 7]{BrauchartHardinSaff2012} for the motivation behind this conjecture.

\begin{conjecture}[{\cite[Conjecture 5]{BrauchartHardinSaff2012}}]\label{conj:2-energy}
	The following asymptotic expansion holds:
	\begin{equation*}
		\minEnergy_2(\S^2,N)=\frac{1}{4}N^2\log{N}+CN^2+O(1),
	\end{equation*}
	with
	\begin{equation*}
		C=\frac{1}{4}(\gamma-\log(2\sqrt{3}\pi))+\frac{\sqrt{3}}{4\pi}(\gamma_1(2/3)-\gamma_1(1/3))=-0.0857\dotso<0,
	\end{equation*}
	where $\gamma$ is the Euler--Mascheroni constant and $\gamma_n(a)$ is the generalized Stieltjes constant appearing as the coefficient $\gamma_n(a)/n!$ of $(1-s)^n$ in the Laurent series expansion of the Hurwitz zeta function $\zeta(s,a)$ about $s=1$.
\end{conjecture}

Partial progress toward this conjecture was obtained in the same work (see \cite[Proposition 3]{BrauchartHardinSaff2012}), where the authors proved that
\begin{equation}\label{eq:original-bounds}
	-\frac{1}{4}N^2+O(N)\leq \minEnergy_2(\S^2,N)-\frac{1}{4}N^2\log N\leq \frac{1}{4}N^2\log \log N+O(N^2).
\end{equation}
The lower bound was recently improved by Brauchart in \cite{Brauchart2026}, where it is shown that
\begin{equation}\label{eq:lower-bound}
	\minEnergy_2(\S^2,N)\geq \frac{1}{4}N^2\log N+\frac{\gamma-1}{4}N^2+O(1),
\end{equation}
with
\begin{equation*}
	\frac{\gamma-1}{4}=-0.1056\ldots.
\end{equation*}
The upper bound in \eqref{eq:original-bounds} was first improved by Alishahi and Zamani in \cite[Corollary~1.4]{AlishahiZamani2015}, where the authors used the spherical ensemble to prove that, for any $N\geq 2$,
\begin{equation*}
	\minEnergy_2(\S^2,N)\leq \frac{1}{4}N^2\log N+\frac{\gamma}{4}N^2,
\end{equation*}
where
\begin{equation*}
	\frac{\gamma}{4}=0.1443\ldots.
\end{equation*}
More recently, de la Torre and Marzo \cite{delaTorreMarzo2024} extended the approach of Armentano, Beltrán, and Shub \cite{ArmentanoBeltranShub2011}, originally focused on the logarithmic energy, to other Riesz energies. In particular, they showed (see \cite[Corollary~4.1]{delaTorreMarzo2024}) that, for any $N\geq 2$,
\begin{equation*}
	\minEnergy_2(\S^2,N)\leq \frac{1}{4}N^2\log N+\frac{1}{4}\biggl(\frac{3}{2}-\log(2\pi)+\gamma\biggr)N^2,
\end{equation*}
where
\begin{equation*}
	\frac{1}{4}\biggl(\frac{3}{2}-\log(2\pi)+\gamma\biggr)=0.0598\ldots.
\end{equation*}
In this work, we further improve these upper bounds, obtaining for the first time a negative coefficient in the $N^2$ term and bringing the upper bound significantly closer to the conjectured value.

\subsection{Main results}

To improve the upper bound for the minimal $2$-energy, we study the expected $2$-energy of a specific realization of the Diamond ensemble originally introduced by Beltrán and Etayo \cite{BeltranEtayo2020}; see also \cite{BeltranEtayoLopezGomez2023} for a generalization of this construction and its extension to the real projective plane. 

In \cite{BeltranEtayo2020}, the authors define a family of points, the \emph{Diamond ensemble}, which, briefly speaking, is a sequence of sets of points on $\S^2$, depending on several parameters, that are placed on the sphere as follows. First, choose $p$ parallels. Then, for each parallel, indexed by $j$, choose $r_j$ points and place them at the vertices of a regular $r_j$-gon inscribed in that parallel rotated by a random phase $\theta_j\in[0,2\pi)$. Finally, add the north and south poles. 

In \cite[Section 3.2]{Etayo2021a}, Etayo proved that any configuration arising from the Diamond ensemble naturally induces an equal-area partition of the sphere. This partition consists of two spherical caps centered at the poles together with a collection of rectangular regions arranged in latitudinal collars, constructed so that each region contains exactly one point of the configuration. Consequently, the Diamond ensemble may be viewed as a variation of the zonal equal-area nodes introduced in \cite{RakhmanovSaffZhou1994}. The two constructions are closely related, though not identical, since the choice of the point within each cell is slightly different.

In \cite{BeltranEtayo2020,BeltranEtayoLopezGomez2023} the authors focused on the logarithmic energy of the Diamond ensemble and proved that this construction yields an expected logarithmic energy that is very close to the conjectured one. In this work we focus instead on the $2$-energy of this construction, obtaining the following result.

\begin{theorem}\label{thm:2-energy-diamond}
	For any integer $M\geq 1$, there exists an explicit set of $N=4M^2+2$ points on $\S^2$, depending on certain random parameters, whose expected $2$-energy is
	\begin{equation*}
		\frac{1}{4}N^2\log N+C_{\diamond,2}N^2+o(N^2),
	\end{equation*}
	where
	\begin{equation}\label{eq:C-diamond-2}
		C_{\diamond,2}=\frac{1}{2}\biggl(\gamma-\frac{1}{2}-\frac{\log 2}{3}\biggr)=-0.0769\ldots.
	\end{equation}
\end{theorem}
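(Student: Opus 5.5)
We will follow the scheme used for the logarithmic energy in the work of Beltrán and Etayo. We first compute the expected $2$-energy of a general Diamond ensemble, and then specialize the parameters.

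\textbf{General formula.} Place the parallels at heights $z_1>\dots>z_p$ with $r_j$ points on parallel $j$, and add the two poles. Write $\rho_j=\sqrt{1-z_j^2}$. For two points on the same parallel, $\norm{x-y}^2=2\rho_j^2(1-\cos(2\pi k/r_j))$. The classical identity
\begin{equation*}
	\sum_{k=1}^{r-1}\frac{1}{\sin^2(\pi k/r)}=\frac{r^2-1}{3}
\end{equation*}
then gives the exact self-interaction $\frac{r_j(r_j^2-1)}{12\rho_j^2}$ of parallel $j$.

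For points on different parallels $i\neq j$, the phases $\theta_i,\theta_j$ are independent and uniform. Hence each pair has expectation
\begin{equation*}
	\frac{1}{2\pi}\int_0^{2\pi}\frac{d\phi}{a-b\cos\phi}=\frac{1}{\sqrt{a^2-b^2}},
\end{equation*}
with $a=2-2z_iz_j$ and $b=2\rho_i\rho_j$. This simplifies to $\frac{1}{2\abs{z_i-z_j}}$, so the contribution of the pair of parallels is $\frac{r_ir_j}{2\abs{z_i-z_j}}$. Pole-to-parallel terms are deterministic, equal to $\frac{r_j}{2(1\mp z_j)}$, and the pole-to-pole term is $1/4$. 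Summing these pieces gives an exact formula for the expected energy in terms of $(z_j,r_j)$ only.

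\textbf{Specific realization.} Take $p=4M-1$ parallels. Choose $r_j$ piecewise linear in $j$, namely $r_j=4j$ for $j\le 2M$, symmetric about the equator. The heights $z_j$ are fixed by the equal-area condition $1-z_j=\frac{1+\sum_{l<j}r_l+r_j/2}{N/2}$ or a similar choice; this is what produces $N=4M^2+2$. With these choices, $z_j$ is quadratic in $j$, say $1-z_j=2j^2/(2M^2+1)$ in the northern hemisphere.

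The same-parallel terms contribute $\sum_j r_j^3/(12\rho_j^2)$. This is of order $M^4$, and its main part is a Riemann sum giving $\asymp N^2$ with a computable constant.

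The cross terms $\sum_{i\ne j} r_ir_j/(2\abs{z_i-z_j})$ carry the $N^2\log N$ growth. After the substitutions, $\abs{z_i-z_j}$ factors as $\abs{i-j}(i+j)$ times a constant within a hemisphere, and is roughly linear across hemispheres. The inner sums then reduce to harmonic-type sums $\sum_{k\le n}1/k=\log n+\gamma+O(1/n)$, which is where $\gamma$ enters. Pairs in opposite hemispheres are a smooth sum and give a pure Riemann-integral constant. The $\log 2$ should emerge from terms such as $\sum 1/(i+j)$.

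\textbf{Main obstacle.} The main difficulty is the precise asymptotic evaluation of the double sum $\sum_{i\neq j} \frac{ij}{\abs{i^2-j^2}}$ (and its variants) to accuracy $o(M^4)$. We plan to write $\frac{1}{i^2-j^2}=\frac{1}{2j}\bigl(\frac{1}{i-j}-\frac{1}{i+j}\bigr)$ for $i>j$ and reduce everything to sums of harmonic numbers $H_n$. We then apply Euler--Maclaurin, or the expansion of the digamma function $\digamma$, to the resulting single sums, carefully tracking the constants. Every $O(M^4)$ contribution must be kept, including boundary terms near the equator and the poles, and the $M^4\log M$ coefficient must be checked to equal $\frac14 N^2\log N$ once $M^4=N^2/16+O(N)$ is substituted. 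Finally, we collect constants and verify that they combine into $C_{\diamond,2}=\frac12(\gamma-\frac12-\frac{\log2}{3})$.
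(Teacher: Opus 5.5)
Your strategy is the paper's. You use the same exact expected-energy formula: the identity $\sum_{k=1}^{r-1}\sin^{-2}(\pi k/r)=(r^2-1)/3$ on each parallel, the average $1/(2\abs{z_i-z_j})$ for distinct parallels, and pole terms of lower order. You also use the same realization $r_j=4j$ with equal-area heights, harmonic-number asymptotics for same-hemisphere pairs, and Riemann sums for the self-energies and the opposite-hemisphere pairs. Your partial fractions, summed with the larger index $k$ outside, give $\tfrac{k}{2}(H_{k-1}+H_k-H_{2k-1})$, which is slightly cleaner than the paper's $\tfrac{j}{2}(H_{M-j}+H_{M+j}-H_{2j})$. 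Both lead to $\sum_{1\le j<k\le M}\frac{jk}{k^2-j^2}=\tfrac14 M^2\log M+\tfrac14(\gamma-\tfrac12-\log 2)M^2+O(M\log M)$. There are, however, three concrete problems.

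First, your parameters are inconsistent. Taking $p=4M-1$ with $r_j=4j$ for $j\le 2M$ gives $N=16M^2+2$, not $4M^2+2$, and your heights $1-z_j=2j^2/(2M^2+1)$ exceed $2$ at the equatorial index $j=2M$. You need $p=2M-1$, $r_j=4j$ for $j\le M$, and $r_{2M-j}=r_j$. With that choice, your equal-area heights $1-z_j=(1+2j^2)/(1+2M^2)$ and the paper's $(1+4j^2)/(1+4M^2)$ give the same $N^2$ coefficient. The reason is that $z_j-z_k\propto k^2-j^2$ and $z_j+z_k\propto 2M^2-j^2-k^2$ with constants agreeing up to a factor $1+O(M^{-2})$, and the self-energy changes only by $O(M^2\log M)$.

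Second, the opposite-hemisphere sum is not smooth. It equals $\sum_{j,k=1}^{M-1}f(j/M,k/M)$ with $f(x,y)=xy/(2-x^2-y^2)$, which blows up at $(1,1)$, i.e.\ for parallels adjacent to the equator on opposite sides. So convergence of $M^{-2}\sum f$ to $\int_{[0,1]^2}f=\tfrac12\log 2$ needs an argument. The paper uses that $f$ is increasing in each variable to squeeze the sum between $\int_{[0,1-1/M]^2}f$ and $\int_{[0,1]^2}f$.

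Third, you stop before the bookkeeping, which is the actual content of the theorem. The pieces are:
the self-energies give $\tfrac{16}{3}\log 2\,M^4$;
the same-hemisphere pairs give $8(1+4M^2)$ times the sum above;
the opposite-hemisphere pairs give $8\log 2\,M^4$.
Together these total $8M^4\log M+(8\gamma-4+\tfrac{16}{3}\log 2)M^4+o(M^4)$.

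The passage to $N$ is not just a check of the leading coefficient. Since $\log M=\tfrac12\log N-\log 2+o(1)$, the term $8M^4\log M$ becomes $\tfrac14N^2\log N-\tfrac12\log 2\,N^2+o(N^2)$. This $-\tfrac12\log 2$ is exactly what converts the $+\tfrac13\log 2$ from the $M^4$ term into the $-\tfrac16\log 2$ in $C_{\diamond,2}$.
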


The specific construction leading to \cref{thm:2-energy-diamond} is described in \cref{sec:energy-4j}. Observe that \cref{thm:2-energy-diamond} provides an asymptotic upper bound for the minimal $2$-energy only when the number of points is of the form $N=4M^2+2$, with $M\geq 1$. Using the same idea as in \cite[Corollary 2]{BeltranMarzoOrtegaCerda2016}, we are able to extend the upper bound in \cref{thm:2-energy-diamond} to any value of $N$.

\begin{corollary}\label{cor:2-energy-diamond-all-N}
	For every $N\geq 2$, the following upper bound holds:
	\begin{equation*}
		\minEnergy_2(\S^2,N)\leq \frac{1}{4}N^2\log N+C_{\diamond,2}N^2+o(N^2),
	\end{equation*}
	where $C_{\diamond,2}$ is as in \eqref{eq:C-diamond-2}.
\end{corollary}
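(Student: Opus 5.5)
The idea, following \cite[Corollary 2]{BeltranMarzoOrtegaCerda2016}, is to interpolate between the special values $N=4M^2+2$ covered by \cref{thm:2-energy-diamond}. Since an expected value is attained or exceeded by some realization, \cref{thm:2-energy-diamond} gives, for each $M\geq 1$, a deterministic configuration of $N_M\defined 4M^2+2$ points with
\begin{equation*}
	\Energy_2(\omega_{N_M})\leq \frac{1}{4}N_M^2\log N_M+C_{\diamond,2}N_M^2+o(N_M^2).
\end{equation*}
Given an arbitrary $N$, I will choose $M$ minimal with $N_M\geq N$. Then $N_M-N\leq N_M-N_{M-1}=8M-4=O(\sqrt N)$, so $N_M=N+O(\sqrt N)$.

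Next, I will delete $N_M-N$ points from $\omega_{N_M}$ and use an averaging argument to control the energy of what remains. If we remove a uniformly random subset of size $N_M-N$, each pair survives with probability $\frac{N(N-1)}{N_M(N_M-1)}$. Hence some $N$-point subset $\omega_N\contained\omega_{N_M}$ satisfies
\begin{equation*}
	\Energy_2(\omega_N)\leq \frac{N(N-1)}{N_M(N_M-1)}\Energy_2(\omega_{N_M}).
\end{equation*}
This gives $\minEnergy_2(\S^2,N)\leq \frac{N(N-1)}{N_M(N_M-1)}\Energy_2(\omega_{N_M})$.

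It remains to expand the right-hand side. Writing $N_M=N+\delta$ with $\delta=O(\sqrt N)$, we have $\frac{N(N-1)}{N_M(N_M-1)}=1-\frac{2\delta}{N}+O(\delta^2/N^2)$. Multiplying this by $\frac14 N_M^2\log N_M+C_{\diamond,2}N_M^2+o(N_M^2)$, I expect the cross terms to combine as follows. First, $N_M^2\cdot N(N-1)/(N_M(N_M-1))=N^2+O(N)$. Second, $\log N_M=\log N+O(\delta/N)=\log N+O(N^{-1/2})$, which contributes only $O(N^{3/2})$. Third, the $o(N_M^2)$ term is $o(N^2)$. The result is $\frac14N^2\log N+C_{\diamond,2}N^2+o(N^2)$. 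Small $N$ are absorbed into the $o(N^2)$ term.

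The main obstacle is the bookkeeping in this last expansion. The $\log$ factor must not produce an $N^2$-order error, and this relies on the gaps between consecutive $N_M$ being $o(N)$. The deletion step is routine. One might worry that removing points without averaging could fail to reduce the energy proportionally, but the averaging bound above handles this.
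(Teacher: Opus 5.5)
Your proof is correct and follows the route the paper points to with its citation of Beltrán--Marzo--Ortega-Cerdà: take a realization of the Diamond ensemble with $N_M=4M^2+2\geq N$ points whose energy is at most its expectation, delete $N_M-N=O(\sqrt{N})$ points using the averaging bound (equivalently, monotonicity of $\minEnergy_2(\S^2,N)/(N(N-1))$ in $N$), and check that the $O(\sqrt{N})$ gap only contributes $O(N^{3/2})=o(N^2)$. One wording fix: you need a realization with energy \emph{at most} the expectation, not one that ``attains or exceeds'' it; your displayed inequality already uses the correct direction.
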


As we can see, the constant in the quadratic term of the upper bound is negative and differs from the conjectured constant in \cref{conj:2-energy} by only about 
$0.0088$. To the best of our knowledge, this is the first upper bound for the quadratic term in the asymptotic expansion of $\minEnergy_2(\S^2,N)$ with a negative coefficient. Consequently, combining the previous result and the lower bound in \eqref{eq:lower-bound}, we can state the following corollary.

\begin{corollary}
	There exist constants $C_1,C_2<0$ such that, for every $N\geq 2$,
	\begin{equation*}
		C_1N^2\leq \minEnergy_2(\S^2,N)-\frac{1}{4}N^2\log N\leq C_2N^2.
	\end{equation*}
\end{corollary}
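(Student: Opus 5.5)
This last corollary is obtained by combining the two one-sided estimates already available, so the plan is short.

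\emph{Lower bound.} By \eqref{eq:lower-bound}, $\minEnergy_2(\S^2,N)-\frac14 N^2\log N\geq \frac{\gamma-1}{4}N^2-K$ for some absolute $K>0$. We have $\frac{\gamma-1}{4}<0$ and $-K\geq -\frac{K}{4}N^2$ for $N\geq 2$. Hence $C_1:=\frac{\gamma-1}{4}-\frac K4<0$ works for every $N\geq 2$. Even if the $O(1)$ term were only valid for $N\ge N_0$, finitely many values of $N$ can be absorbed by making $C_1$ more negative, since each $\minEnergy_2(\S^2,N)$ is finite.

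\emph{Upper bound.} By \cref{cor:2-energy-diamond-all-N}, $\minEnergy_2(\S^2,N)-\frac14N^2\log N\leq C_{\diamond,2}N^2+o(N^2)$. Since $C_{\diamond,2}<0$, choose $\varepsilon=|C_{\diamond,2}|/2$. There is then $N_0$ such that for $N\geq N_0$ the left side is at most $\frac{C_{\diamond,2}}{2}N^2$.

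\emph{Finitely many small $N$.} The remaining values $2\le N<N_0$ are the main obstacle, because $C_2$ must be negative for every $N$. Finiteness alone does not help: we need $\minEnergy_2(\S^2,N)<\frac14N^2\log N$ strictly for each such $N$. I would get this from an explicit bound valid for all $N\ge2$:
\begin{itemize}
\item The de la Torre--Marzo bound gives $\le\frac14N^2\log N+0.0598N^2$. This does not suffice, since its constant is positive.
\item For small $N$ I would instead use explicit configurations. For $N=2$, antipodal points give energy $2\cdot\frac14=\frac12>\frac14\cdot4\log2\approx0.69$? Since $0.5<0.693$, this case is fine.
\item In general I would check $\minEnergy_2(\S^2,N)<\frac14N^2\log N$ numerically or with known near-optimal configurations for each $N<N_0$. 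Alternatively, I would make the $o(N^2)$ term in the Diamond bound effective, so that $N_0$ is explicit and small.
\end{itemize}
If each of these finitely many differences is strictly negative, then taking $C_2$ to be the maximum of $C_{\diamond,2}/2$ and the finitely many ratios $(\minEnergy_2(\S^2,N)-\frac14N^2\log N)/N^2$ gives a negative constant, as required. The hardest step is this verification for small $N$. I would first try to make the error term in \cref{thm:2-energy-diamond} explicit from the exact expected-energy formula, and handle whatever remains by direct computation.
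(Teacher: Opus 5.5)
Your route is the paper's own. The paper obtains this corollary in one line by combining the lower bound \eqref{eq:lower-bound} with \cref{cor:2-energy-diamond-all-N}. Your lower bound is correct, and so is your upper bound for $N\ge N_0$. You are also right that this combination alone does not give a negative $C_2$ for \emph{every} $N\ge 2$. Since \cref{cor:2-energy-diamond-all-N} carries an unquantified $o(N^2)$, one also needs $\minEnergy_2(\S^2,N)<\frac14N^2\log N$ for each $N<N_0$. The paper does not address this point.

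The gap is that your proposal does not close this step either. Checking ``numerically for each $N<N_0$'' is not yet a finite task, because $N_0$ is not explicit: \cref{lemma:S4,lemma:S6} are proved by Riemann-sum limits without rates, and the passage from $N=4M^2+2$ to arbitrary $N$ is only asymptotic. The effectivization you mention at the end is the real work, and it has two parts.

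First, for $N=4M^2+2$ you only need \emph{upper} bounds on $S_1,\dots,S_6$, and these are easy:
\begin{itemize}
\item In the notation of the proof of \cref{lemma:S4}, $\phi_M\le\phi$, and left Riemann sums of the increasing function $\phi$ lie below $\int_0^1\phi$. Hence $S_4\le(\frac{\log 2}{4}-\frac18)M^2$.
\item The upper half of the sandwich in the proof of \cref{lemma:aux-lemma-integral} gives $S_6\le\frac{\log 2}{2}M^2$.
\item $S_5$ has an exact closed form via the identities in \cref{appendix:harmonic-numbers}.
\end{itemize}

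Second, for general $N$ take $N'=4M^2+2\ge N$ minimal. Averaging over the $N$-point subsets of an $N'$-point configuration gives $\minEnergy_2(\S^2,N)\le\frac{N(N-1)}{N'(N'-1)}\minEnergy_2(\S^2,N')$. This costs about $\frac14N^2\log(N'/N)$, which can be as large as roughly $N^2/(2M)$.

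Be aware that the crude bound on $S_6$ throws away a $-\frac12M\log M$ correction. In the true asymptotics this correction cancels the $+\frac14M\log M$ term of $S_5$ inside $2S_5+S_6$. So these explicit estimates carry an excess of relative size about $(\log M)/(2M)$, and the explicit $N_0$ you get this way is large.

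The range $2\le N<N_0$ must then be certified by explicit configurations, for example through the exact formula of \cref{prop:energy-p} or numerically optimized point sets. Unless you sharpen the estimates, that is a substantial computer-assisted verification, not a single check at $N=2$. Until it is done, your argument, like the paper's one-line deduction, proves the upper bound with $C_2<0$ only for all sufficiently large $N$.
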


The paper is organized as follows. In \cref{sec:general-construction}, we revisit the general construction underlying the Diamond ensemble and derive a formula for its expected $2$-energy. Next, in \cref{sec:energy-4j} we use a particular instance of this construction to prove the upper bound in \cref{thm:2-energy-diamond}. \Cref{sec:technical-proofs} is devoted to the proofs of several technical results. Finally, \cref{appendix:harmonic-numbers} collects some useful results on harmonic numbers used throughout \cref{sec:technical-proofs}.

\section{The Diamond ensemble and a formula for its expected $2$-energy}\label{sec:general-construction}

In this section, we briefly review the original construction introduced by Beltrán and Etayo in \cite{BeltranEtayo2020} and derive a closed formula for the $2$-energy of these configurations. Consider the following definition.

\begin{definition}\label{def:Omega}
	Let $\Omega(p,\set{r_j},\set{z_j})$ be the following set of points:
	\begin{equation*}
		\Omega(p,\set{r_j},\set{z_j})=\set{\mathcal{N}}\union\set{\mathcal{S}}\union\set{x_{j}^{\ell}}_{j=1,\dotsc,p}^{\ell=1,\dotsc,r_j},
	\end{equation*}
	where $\mathcal{N}=(0,0,1)$, $\mathcal{S}=(0,0,-1)$, and 
	\begin{equation*}
		x_{j}^{\ell}=\Biggl(\sqrt{1-z_j^2}\cos\biggl(\frac{2\pi \ell}{r_j}+\theta_j\biggr),\sqrt{1-z_j^2}\sin\biggl(\frac{2\pi \ell}{r_j}+\theta_j\biggr),z_j\Biggr).
	\end{equation*}
	Here, $p$ is the number of parallels, $r_j$ is the number of equally spaced points placed on the $j$th parallel, and $z_j$ is the height of the $j$th parallel, with $-1<z_j<1$, $1\leq j\leq p$, and $1\leq \ell \leq r_j$. Finally, $\theta_j$, with $0\leq \theta_j<2\pi$, denotes the random phase associated with the $j$th parallel. The total number of points in $\Omega(p,\set{r_j},\set{z_j})$ is
	\begin{equation*}
		N=2+\sum_{j=1}^{p} r_j.
	\end{equation*}
\end{definition} 

From \cref{def:Omega}, constructing a configuration from this general scheme amounts to specifying the number of parallels, their heights, and the number of points placed on each parallel. The following result provides a closed formula for the expected value of the $2$-energy of the points in $\Omega(p,\set{r_j},\set{z_j})$ with respect to the random parameters $\theta_1,\dotsc,\theta_p$.

\begin{proposition}\label{prop:energy-p}
	The expected value of the $2$-energy of configurations drawn from $\Omega(p,\set{r_j},\set{z_j})$ is
	\begin{align}\label{eq:energy-p}
		\MoveEqLeft\expectation[\Big]{\theta_1,\dotsc,\theta_p\in[0,2\pi]}{\Energy_{2}(\Omega(p,\set{r_j},\set{z_j}))}\\
		&=\frac{1}{2}+\frac{23}{12}\sum_{j=1}^{p}\frac{r_j}{1-z_j^2}+\frac{1}{12}\sum_{j=1}^{p} \frac{r_j^3}{1-z_j^2}
		+\frac{1}{2}\sum_{\substack{j,k=1\\ j\neq k}}^{p}\frac{r_jr_k}{\abs{z_j-z_k}}.\notag
	\end{align}
\end{proposition}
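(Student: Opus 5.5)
We split the ordered-pair sum defining $\Energy_2$ according to where the two points lie: pole–pole pairs, pole–parallel pairs, pairs within one parallel, and pairs on two different parallels. Only the last class depends on the random phases, and we handle it by averaging over the relative phase. Throughout we use that for $x,y\in\S^2$ one has $\norm{x-y}^2=2-2\innerprod{x}{y}$.

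\emph{Poles.} The ordered pairs $(\mathcal{N},\mathcal{S})$ and $(\mathcal{S},\mathcal{N})$ each contribute $1/4$, which gives the constant $\frac12$. For a point $x$ at height $z$ we have $\norm{x-\mathcal{N}}^2=2(1-z)$ and $\norm{x-\mathcal{S}}^2=2(1+z)$. Hence
\begin{equation*}
	\frac{1}{2(1-z)}+\frac{1}{2(1+z)}=\frac{1}{1-z^2}.
\end{equation*}
Counting both orders, the pole–parallel interactions contribute $2\sum_j r_j/(1-z_j^2)$.

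\emph{Same parallel.} Two points on the $j$th parallel separated by angle $2\pi m/r_j$ are at squared distance $4(1-z_j^2)\sin^2(\pi m/r_j)$. The within-parallel ordered sum is therefore
\begin{equation*}
	\frac{r_j}{4(1-z_j^2)}\sum_{m=1}^{r_j-1}\csc^2\biggl(\frac{\pi m}{r_j}\biggr)=\frac{r_j(r_j^2-1)}{12(1-z_j^2)},
\end{equation*}
using the classical identity $\sum_{m=1}^{n-1}\csc^2(\pi m/n)=(n^2-1)/3$. This identity can be proved, for instance, from the partial fractions of $\cot$ or from the roots of $\sin(nx)/\sin x$. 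Combining this with the pole term, $2-\frac1{12}=\frac{23}{12}$, which yields exactly the coefficients $\frac{23}{12}$ and $\frac{1}{12}$ in \eqref{eq:energy-p}.

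\emph{Different parallels.} This is the main computation. For $j\neq k$ and a fixed pair of points, write $\phi$ for their angular difference. Since $\theta_j,\theta_k$ are independent and uniform, $\phi$ is uniform modulo $2\pi$ for every choice of $\ell,\ell'$. Set $a=\sqrt{1-z_j^2}$ and $b=\sqrt{1-z_k^2}$. Then
\begin{equation*}
	\norm{x-y}^2=2-2z_jz_k-2ab\cos\phi=A-B\cos\phi .
\end{equation*}
By the standard integral $\frac1{2\pi}\int_0^{2\pi}\frac{d\phi}{A-B\cos\phi}=(A^2-B^2)^{-1/2}$, valid for $A>|B|$, the expectation is $(A^2-B^2)^{-1/2}$. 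It remains to verify that $A^2-B^2=4(z_j-z_k)^2$. Indeed,
\begin{equation*}
	(1-z_jz_k)^2-(1-z_j^2)(1-z_k^2)=z_j^2+z_k^2-2z_jz_k=(z_j-z_k)^2.
\end{equation*}
Each cross pair thus has expected energy $1/(2\abs{z_j-z_k})$. There are $r_jr_k$ such pairs for each ordered $(j,k)$, which gives the last sum. The hypothesis $z_j\neq z_k$ is implicit here; otherwise the formula is infinite, consistently.

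The only genuinely nontrivial inputs are the cosecant-square identity and the Poisson-kernel-type integral, and I expect no real obstacle beyond checking the algebra. The last point to confirm is that expectation commutes with the finite sum, which is immediate by linearity.
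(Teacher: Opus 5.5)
Your proof is correct and follows the paper's route. You use the same split into pole terms, within-parallel terms and cross-parallel terms, and compute the cross term with the same integral $\frac{1}{2\pi}\int_0^{2\pi}(A-B\cos\phi)^{-1}\,d\phi=(A^2-B^2)^{-1/2}$; the only minor difference is that you derive the within-parallel energy $r_j(r_j^2-1)/(12(1-z_j^2))$ from $\sum_{m=1}^{n-1}\csc^2(\pi m/n)=(n^2-1)/3$, whereas the paper cites Brauchart, Hardin, and Saff for it.
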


\begin{proof}
	See \cref{subsec:proof-prop:energy-p}.
\end{proof}

If we impose the additional symmetry condition that the configuration of parallels is symmetric with respect to the equator, while allowing each parallel to have its own independent random phase, we obtain the following result.

\begin{corollary}\label{cor:energy-M}
	Let $p=2M-1$ be an odd integer. If $r_j=r_{2M-j}$ and $z_j=-z_{2M-j}$ for $1\leq j\leq 2M-1$, then
	\begin{align*}
		\MoveEqLeft\expectation[\Big]{\theta_1,\dotsc,\theta_{2M-1}\in[0,2\pi]}{\Energy_{2}(\Omega(2M-1,\set{r_j},\set{z_j}))}\\
		&=\frac{1}{2}+\frac{23}{12}r_M+\frac{1}{12}r_M^3+\frac{23}{6}\sum_{j=1}^{M-1}\frac{r_j}{1-z_j^2}+\frac{1}{6}\sum_{j=1}^{M-1} \frac{r_j^3}{1-z_j^2}\\
		&\quad+2\sum_{j=1}^{M-1}\sum_{k=j+1}^{M}\frac{r_jr_k}{z_j-z_k}
		+\sum_{j=1}^{M-1}\sum_{k=1}^{M-1}\frac{r_jr_k}{z_j+z_k}.
	\end{align*}
\end{corollary}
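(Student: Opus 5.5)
This is a direct specialization of \cref{prop:energy-p} with $p=2M-1$. The plan is to split each sum in \eqref{eq:energy-p} according to whether an index lies in $\{1,\dots,M-1\}$, equals $M$, or lies in $\{M+1,\dots,2M-1\}$, and then fold the last range onto the first via $j\mapsto 2M-j$. The symmetry conditions give $z_M=-z_M$, so $z_M=0$. They also give $r_{2M-j}=r_j$ and $1-z_{2M-j}^2=1-z_j^2$.

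\textbf{Single sums.} For the single sums $\sum_j r_j/(1-z_j^2)$ and $\sum_j r_j^3/(1-z_j^2)$, the term $j=M$ contributes $r_M$ and $r_M^3$ respectively, since $z_M=0$. The terms $j$ and $2M-j$ coincide for $j\le M-1$. This yields $r_M+2\sum_{j=1}^{M-1}r_j/(1-z_j^2)$, and similarly for the cubic sum. Multiplying by $23/12$ and $1/12$ produces exactly the first four non-constant terms of the claimed formula.

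\textbf{Double sum.} The double sum $\frac12\sum_{j\neq k} r_jr_k/\abs{z_j-z_k}$ is symmetric in $(j,k)$, so it equals $\sum_{j<k}$. I will assume, as implicit in the construction, that the heights are ordered decreasingly: $z_1>z_2>\dots>z_{M-1}>z_M=0$, with the lower hemisphere mirrored. Then I split the unordered pairs $\{j,k\}$ into four classes.
\begin{enumerate}
\item Both indices lie in $\{1,\dots,M\}$, with $j<k$. This gives $\sum_{j=1}^{M-1}\sum_{k=j+1}^{M} r_jr_k/(z_j-z_k)$.
\item Both indices lie in $\{M,\dots,2M-1\}$. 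By the reflection $j\mapsto 2M-j$ this is the same sum again, giving a factor $2$ in total. The pair involving $M$ is counted once in each class, which is correct since its partner differs.
\item The pair is $\{j,2M-k\}$ with $j,k\le M-1$. Then $\abs{z_j-z_{2M-k}}=z_j+z_k$, because both heights are positive. Each such unordered pair is counted exactly once as the ordered pair $(j,k)\in\{1,\dots,M-1\}^2$, including $j=k$. This produces $\sum_{j=1}^{M-1}\sum_{k=1}^{M-1} r_jr_k/(z_j+z_k)$.
\item There is no remaining overlap to correct. The pairs involving $M$ are already placed in the first two classes, and no cross pair contains $M$.
\end{enumerate}

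\textbf{Main obstacle.} The main obstacle is this bookkeeping of pairs, making sure the counts produce the coefficients $2$ and $1$ without double counting the middle parallel. The ordering and positivity assumption $z_j>0$ for $j<M$ must be stated explicitly; it is what removes the absolute values.
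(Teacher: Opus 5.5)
Your proposal is correct and is essentially the paper's proof: the paper also folds the single sums using $z_M=0$, and it splits the double sum into the ranges $j<k\le M$; $j\le M-1$, $k\ge M+1$; and $M\le j<k$, reflecting the last range onto the first, and these are exactly your three classes of unordered pairs. Your explicit ordering assumption $z_1>\dots>z_{M-1}>z_M=0$ is the one the paper uses silently when it replaces $\abs{z_j-z_k}$ by $z_j-z_k$ for $j<k$, so stating it is a welcome clarification.
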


\begin{proof}
	See \cref{subsec:proof-cor:energy-M}.
\end{proof}

As shown in \cite{BeltranEtayo2020}, in the case of the logarithmic energy it is possible to derive an explicit expression for the heights $z_j$ that minimize the corresponding expression for the expected energy. In the case of the $2$-energy, however, that is not possible. Therefore, we will use instead the optimal heights obtained in the logarithmic case (see \cite[Proposition 2.5]{BeltranEtayo2020}), defined as follows:
\begin{equation}\label{eq:heights-original}
	z_j=1-\frac{1+r_j+2\sum_{k=1}^{j-1}r_k}{N-1}.
\end{equation}
Hence, it only remains to choose the parameters $r_j$, that is, the number of points that we want to place on each parallel. In \cite{BeltranEtayo2020} (see also \cite{BeltranEtayoLopezGomez2023}), the authors explored different choices for these parameters, all of them based on defining the number of points $r_j$ using a piecewise linear function $r(x)$, with $0\leq x\leq 2M$, such that $r(j)$ gives the number of points on the $j$th parallel. In this work, we use the simplest choice considered there, namely $r_j=4j$ for $1\leq j\leq M$. 

Although more refined choices may lead to slightly improved results, they would require much more involved computations and the proofs would be less transparent. Moreover, numerical experiments indicate that the improvement is marginal. We refer the reader to \cite[Section 3.1]{BeltranEtayo2020} for the heuristic arguments motivating these choices of the parameters $r_j$.

\section[A specific realization of the Diamond ensemble. Proof of \cref{thm:2-energy-diamond}]{A specific realization of the Diamond ensemble.\\ Proof of \cref{thm:2-energy-diamond}}\label{sec:energy-4j}

In this section, we study the expected $2$-energy of a particular realization of the Diamond ensemble introduced in the previous section. More specifically, let $M$ be a positive integer, let $p=2M-1$, and let $r_j=4j$. Then, the resulting configuration consists of 
\begin{equation*}
	N=2+\sum_{j=1}^{2M-1} r_j=4M^2+2
\end{equation*}
points. As mentioned above, since no closed expression for the heights that minimize \eqref{eq:energy-p} is available for the $2$-energy, we will choose $z_j$ as the optimal heights for the logarithmic case, that is,
\begin{equation}\label{eq:heights}
	z_j=1-\frac{1+r_j+2\sum_{k=1}^{j-1}r_k}{N-1}=1-\frac{1+4j^2}{1+4M^2}=\frac{4M^2-4j^2}{1+4M^2}.
\end{equation}
\Cref{fig:diamond} illustrates a realization of this construction for $N=402$ points.

\begin{figure}[htbp]
	\centering
	\includegraphics[width=0.55\textwidth]{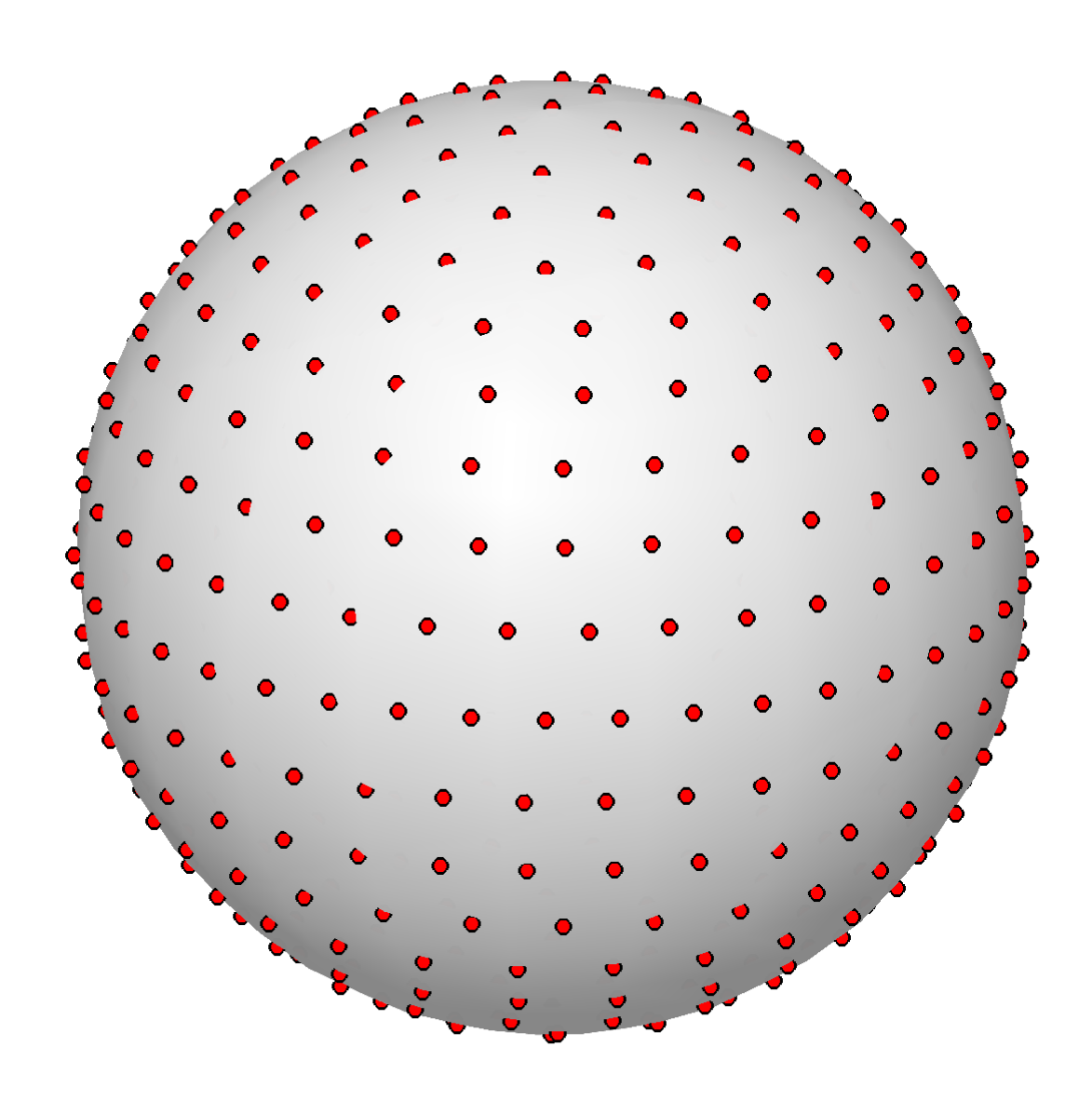}
	\caption{402 points on the sphere drawn from the Diamond ensemble with $r_j=4j$.}
	\label{fig:diamond}
\end{figure}

Choosing the heights $z_j$ as in \eqref{eq:heights}, one readily verifies that
\begin{equation}\label{eq:heights-1}
	\frac{1}{1-z_j^2}=\frac{1+4M^2}{2}\biggl(\frac{1}{1+4j^2}+\frac{1}{1+8M^2-4j^2}\biggr).
\end{equation}
Also, we have
\begin{align}
	z_j-z_k&=\frac{4M^2-4j^2}{1+4M^2}-\frac{4M^2-4k^2}{1+4M^2}=\frac{4(k^2-j^2)}{1+4M^2},\label{eq:heights-2}\\
	z_j+z_k&=\frac{4M^2-4j^2}{1+4M^2}+\frac{4M^2-4k^2}{1+4M^2}=\frac{4(2M^2-j^2-k^2)}{1+4M^2}.\label{eq:heights-3}
\end{align}
Hence, combining \cref{cor:energy-M} and \cref{eq:heights-1,eq:heights-2,eq:heights-3} we obtain the following result for the expected $2$-energy of the Diamond ensemble in this case.

\begin{corollary}\label{cor:expected-energy-4j}
	The expected $2$-energy of configurations drawn from the Diamond ensemble with $r_j=4j$ and $z_j$ given by \eqref{eq:heights} is
	\begin{align*}
		\MoveEqLeft\expectation[\Big]{\theta_1,\dotsc,\theta_{2M-1}\in[0,2\pi]}{\Energy_{2}(\Omega(2M-1,\set{r_j},\set{z_j}))}\\
		&=\frac{1}{2}+\frac{23}{3}M+\frac{16}{3}M^3\\
		&\quad+\frac{23}{3}(1+4M^2)\biggl(\sum_{j=1}^{M-1}\frac{j}{1+4j^2}+\sum_{j=1}^{M-1}\frac{j}{1+8M^2-4j^2}\biggr)\\
		&\quad+\frac{16}{3}(1+4M^2)\biggl(\sum_{j=1}^{M-1} \frac{j^3}{1+4j^2}+\sum_{j=1}^{M-1} \frac{j^3}{1+8M^2-4j^2}\biggr)\\
		&\quad+4(1+4M^2)\biggl(2\sum_{j=1}^{M-1}\sum_{k=j+1}^{M}\frac{jk}{k^2-j^2}+\sum_{j=1}^{M-1}\sum_{k=1}^{M-1}\frac{jk}{2M^2-j^2-k^2}\biggr).
	\end{align*}
\end{corollary}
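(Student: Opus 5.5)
This is a direct substitution into \cref{cor:energy-M}. The plan is to verify its hypotheses for this realization, then rewrite each of its terms using \cref{eq:heights-1,eq:heights-2,eq:heights-3}.

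\emph{Hypotheses.} Take $p=2M-1$. The formula $r_j=4j$ can only be used for $1\leq j\leq M$, and we set $r_j=r_{2M-j}$ for $j>M$; this is the piecewise linear choice described above. By \eqref{eq:heights-original}, with the sum over $k<j$ extended symmetrically, one checks that $z_j=-z_{2M-j}$. In particular $z_M=0$, and for $j\leq M$ the heights reduce to \eqref{eq:heights}. So \cref{cor:energy-M} applies. The total count is $N=2+2\sum_{j=1}^{M-1}4j+4M=4M^2+2$, which matches.

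\emph{Substitution, term by term.}
\begin{itemize}
\item The equatorial terms: with $r_M=4M$ we get $\tfrac{23}{12}r_M=\tfrac{23}{3}M$ and $\tfrac{1}{12}r_M^3=\tfrac{16}{3}M^3$.
\item The linear single sum: by \eqref{eq:heights-1},
\[
\frac{23}{6}\sum_{j=1}^{M-1}\frac{4j}{1-z_j^2}=\frac{23}{6}\cdot 4\cdot\frac{1+4M^2}{2}\sum_{j=1}^{M-1}j\biggl(\frac{1}{1+4j^2}+\frac{1}{1+8M^2-4j^2}\biggr),
\]
which gives the coefficient $\tfrac{23}{3}(1+4M^2)$.
\item The cubic single sum: in the same way, $\tfrac16\cdot 64\cdot\tfrac{1+4M^2}{2}=\tfrac{16}{3}(1+4M^2)$ multiplies the two $j^3$ sums.
\item The first double sum: by \eqref{eq:heights-2}, each term is $r_jr_k/(z_j-z_k)=16jk(1+4M^2)/(4(k^2-j^2))$. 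The prefactor $2$ in \cref{cor:energy-M} then becomes $2\cdot4(1+4M^2)$.
\item The second double sum: by \eqref{eq:heights-3}, each term is $16jk(1+4M^2)/(4(2M^2-j^2-k^2))$, giving $4(1+4M^2)\sum\sum jk/(2M^2-j^2-k^2)$.
\end{itemize}
Factoring $4(1+4M^2)$ out of the two double sums yields exactly the stated expression.

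\emph{Checks on \eqref{eq:heights-1}.} This identity is used in the two single sums. To verify it, write
\[
1-z_j^2=(1-z_j)(1+z_j)=\frac{(1+4j^2)(1+8M^2-4j^2)}{(1+4M^2)^2}.
\]
Then expand in partial fractions, noting that $(1+4j^2)+(1+8M^2-4j^2)=2(1+4M^2)$. Also, all the sums run only over $j\leq M-1$ or $j,k\leq M$, so the denominators $k^2-j^2$ (with $k>j$) and $2M^2-j^2-k^2$ never vanish.

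The only real obstacle is bookkeeping. The points to watch are the symmetric extension of $r_j$ beyond $M$ and the resulting heights, and keeping the numerical constants straight when combining the factors $2$, $4$, $16$ and $\tfrac12$.
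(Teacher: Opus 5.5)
Your proposal is correct and follows the paper's own argument: the paper obtains this corollary by substituting $r_j=4j$ and the identities \eqref{eq:heights-1}, \eqref{eq:heights-2}, \eqref{eq:heights-3} directly into \cref{cor:energy-M}, and your constants all check out. You also verify details the paper leaves to the reader: the symmetric extension $r_j=r_{2M-j}$ (needed to get $N=4M^2+2$), the resulting symmetry $z_j=-z_{2M-j}$ with $z_M=0$, and the partial-fraction derivation of \eqref{eq:heights-1}.
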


As a consequence, to prove \cref{thm:2-energy-diamond} it remains to derive asymptotic estimates as $M\tendsto\infty$ for the sums in \cref{cor:expected-energy-4j}. For the reader’s convenience, we have encapsulated the necessary technical results as lemmas and deferred the proofs to \cref{subsec:proofs-sec:energy-4j}. Denote
\begin{align*}
	S_1&=\sum_{j=1}^{M-1}\frac{j}{1+4j^2},\\
	S_2&=\sum_{j=1}^{M-1}\frac{j}{1+8M^2-4j^2},\\
	S_3&=\sum_{j=1}^{M-1} \frac{j^3}{1+4j^2},\\
	S_4&=\sum_{j=1}^{M-1} \frac{j^3}{1+8M^2-4j^2},\\
	S_5&=\sum_{j=1}^{M-1}\sum_{k=j+1}^{M}\frac{jk}{k^2-j^2},\\
	S_6&=\sum_{j=1}^{M-1}\sum_{k=1}^{M-1}\frac{jk}{2M^2-j^2-k^2}.\\
\end{align*}

\begin{lemma}\label{lemma:S1}
	The following estimate holds:
	\begin{equation*}
		S_1=O(\log M),\qquad M\tendsto\infty.
	\end{equation*}
\end{lemma}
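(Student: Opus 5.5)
The estimate follows from a termwise comparison with the harmonic series. For every $j\geq 1$ we have $1+4j^2>4j^2$, and therefore
\begin{equation*}
	0<\frac{j}{1+4j^2}<\frac{j}{4j^2}=\frac{1}{4j}.
\end{equation*}
Summing over $1\leq j\leq M-1$ gives
\begin{equation*}
	0< S_1<\frac{1}{4}\sum_{j=1}^{M-1}\frac{1}{j}=\frac{1}{4}H_{M-1},
\end{equation*}
where $H_n$ denotes the $n$th harmonic number; for $M=1$ the sum is empty and $S_1=0$.

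It remains to bound the harmonic number. The elementary estimate $H_n\leq 1+\log n$ follows from comparing $\sum_{j=2}^{n}1/j$ with $\int_1^n \dd x/x$. Alternatively, one can use the expansion $H_n=\log n+\gamma+O(1/n)$ collected in \cref{appendix:harmonic-numbers}. In either case, $S_1\leq \frac{1}{4}(1+\log M)=O(\log M)$ as $M\tendsto\infty$.

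No step is a real obstacle here. If a sharper statement is useful later, for instance $S_1=\frac{1}{4}\log M+O(1)$, it can be obtained from the identity
\begin{equation*}
	\frac{j}{1+4j^2}=\frac{1}{4j}-\frac{1}{4j(1+4j^2)}.
\end{equation*}
The second term is summable, with general term of order $j^{-3}$, so its total contributes only a constant. For the bound stated in the lemma, however, the comparison above already suffices.
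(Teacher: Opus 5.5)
Your proposal is correct and follows essentially the same approach as the paper: the termwise bound $j/(1+4j^2)\leq 1/(4j)$ gives $S_1\leq \frac{1}{4}H_{M-1}=O(\log M)$. Your additional remarks (the explicit bound $H_n\leq 1+\log n$ and the sharper identity giving $S_1=\frac{1}{4}\log M+O(1)$) are correct but not needed.
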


\begin{proof}
	See \cref{subsubsec:proof-lemma:S1}.
\end{proof}

\begin{lemma}\label{lemma:S2}
	The following estimate holds:
	\begin{equation*}
		S_2=O(1),\qquad M\tendsto\infty.
	\end{equation*}
\end{lemma}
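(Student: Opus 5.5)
The estimate follows from a termwise bound on the denominator, uniform in $j$. For $1\leq j\leq M-1$ we have $4j^2\leq 4(M-1)^2<4M^2$, so
\begin{equation*}
	1+8M^2-4j^2> 1+8M^2-4M^2=1+4M^2>4M^2.
\end{equation*}
Each summand is therefore positive and satisfies
\begin{equation*}
	0<\frac{j}{1+8M^2-4j^2}<\frac{j}{4M^2}.
\end{equation*}

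Summing over $j$ and using $\sum_{j=1}^{M-1}j=\frac{M(M-1)}{2}$ gives
\begin{equation*}
	0<S_2<\frac{1}{4M^2}\cdot\frac{M(M-1)}{2}<\frac{1}{8}
\end{equation*}
for every $M\geq 1$. This yields $S_2=O(1)$.

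There is no genuine obstacle. The key point is that, unlike $S_1$, the denominator of $S_2$ stays of order $M^2$ across the whole range of summation, so no logarithmic term appears. If a sharper statement were needed later, for example the limit of $S_2$ to feed into the $N^2$ coefficient, I would recognize $S_2$ as a Riemann sum. Writing $x=j/M$, one has
\begin{equation*}
	S_2\approx\int_0^1\frac{x}{8-4x^2}\dd x=\frac{\log 2}{8},
\end{equation*}
with an error of $O(1/M)$ from the Euler--Maclaurin formula, since the integrand is smooth on $[0,1]$. Only boundedness is claimed in the statement, so the elementary bound above suffices.
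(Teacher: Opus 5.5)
Your argument is correct and is exactly the paper's proof: bound the denominator below by $4M^2$ uniformly for $1\leq j\leq M-1$, then sum $j/(4M^2)$ to get a bounded quantity. One cosmetic point: for $M=1$ the sum is empty, so the strict inequalities $0<S_2<\tfrac18$ should read $0\leq S_2\leq\tfrac18$.
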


\begin{proof}
	See \cref{subsubsec:proof-lemma:S2}.
\end{proof}

\begin{lemma}\label{lemma:S3}
	The following estimate holds:
	\begin{equation*}
		S_3=\frac{M^2}{8}+O(M),\qquad M\tendsto\infty.
	\end{equation*}
\end{lemma}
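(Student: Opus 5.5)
The plan is to reduce the summand of $S_3$ to a polynomial plus a small remainder by polynomial division, and then to sum each piece separately. Writing $4j^2=(1+4j^2)-1$, we have
\begin{equation*}
	\frac{j^3}{1+4j^2}=\frac{j}{4}\cdot\frac{4j^2}{1+4j^2}=\frac{j}{4}-\frac{1}{4}\cdot\frac{j}{1+4j^2}.
\end{equation*}
Summing over $1\leq j\leq M-1$ therefore gives the exact identity
\begin{equation*}
	S_3=\frac{1}{4}\sum_{j=1}^{M-1}j-\frac{1}{4}S_1=\frac{M(M-1)}{8}-\frac{1}{4}S_1.
\end{equation*}

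The first term equals $\frac{M^2}{8}-\frac{M}{8}$, so its contribution is $\frac{M^2}{8}+O(M)$. For the second term, \cref{lemma:S1} gives $S_1=O(\log M)$, which is in particular $O(M)$. If we prefer to avoid invoking the lemma, the bound also follows directly: $\frac{j}{1+4j^2}\leq\frac{1}{4j}$, so $S_1\leq\frac{1}{4}\sum_{j=1}^{M-1}\frac{1}{j}=O(\log M)$.

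Combining the two pieces yields $S_3=\frac{M^2}{8}+O(M)$ as $M\tendsto\infty$. In fact the argument gives the sharper estimate $S_3=\frac{M^2}{8}-\frac{M}{8}+O(\log M)$, which may be useful later when this term is multiplied by $1+4M^2$ in \cref{cor:expected-energy-4j}. There is no real obstacle here: the only step that needs any care is the algebraic decomposition, and the rest is routine.
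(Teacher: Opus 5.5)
Your proposal is correct and follows the paper's proof essentially verbatim: the paper uses the same identity $\frac{j^3}{1+4j^2}=\frac{1}{4}\bigl(j-\frac{j}{1+4j^2}\bigr)$, writes $S_3=\frac{1}{4}\sum_{j=1}^{M-1}j-\frac{1}{4}S_1$, and concludes with \cref{lemma:S1}. Your sharper form $S_3=\frac{M^2}{8}-\frac{M}{8}+O(\log M)$ is a valid refinement, though the paper does not need it, since after multiplication by $1+4M^2$ an $O(M)$ error already contributes only $O(M^3)=o(M^4)$.
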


\begin{proof}
	See \cref{subsubsec:proof-lemma:S3}.
\end{proof}

\begin{lemma}\label{lemma:S4}
	The following estimate holds:
	\begin{equation*}
		S_4=\biggl(\frac{\log 2}{4}-\frac{1}{8}\biggr)M^2+o(M^2),\qquad M\tendsto\infty.
	\end{equation*}
\end{lemma}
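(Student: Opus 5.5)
We need $S_4=\sum_{j=1}^{M-1} j^3/(1+8M^2-4j^2)$. The plan is to treat $S_4$ as a Riemann sum, since the denominator never vanishes or becomes small. For $1\le j\le M-1$ we have $1+8M^2-4j^2\ge 4M^2+1$, so the summand is a smooth function of $j/M$ times $M^2\cdot\frac1M$. Concretely, we write
\begin{equation*}
	\frac{j^3}{1+8M^2-4j^2}=\frac{j^3}{8M^2-4j^2}+O\biggl(\frac{j^3}{M^4}\biggr),
\end{equation*}
using
\begin{equation*}
	\Bigl|\frac{1}{8M^2-4j^2+1}-\frac{1}{8M^2-4j^2}\Bigr|\le\frac{1}{(4M^2)^2}.
\end{equation*}
Summing the error over $j\le M-1$ gives $O(1)$.

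For the main term we set $x=j/M$, which gives
\begin{equation*}
	\sum_{j=1}^{M-1}\frac{j^3}{8M^2-4j^2}=M^2\cdot\frac{1}{M}\sum_{j=1}^{M-1}f(j/M),\qquad f(x)=\frac{x^3}{8-4x^2}.
\end{equation*}
Here $f$ is $C^1$ on $[0,1]$, with the denominator between $4$ and $8$. The standard Riemann sum estimate (or Euler--Maclaurin with a bounded derivative) gives
\begin{equation*}
	\frac1M\sum_{j=1}^{M-1}f(j/M)=\int_0^1 f(x)\dd x+O(1/M).
\end{equation*}
Hence $S_4=M^2\int_0^1 f+O(M)$, which is stronger than the claimed $o(M^2)$.

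It remains to evaluate the integral. We write $\frac{x^3}{8-4x^2}=\frac14\cdot\frac{x^3}{2-x^2}$, and then
\begin{equation*}
	\frac{x^3}{2-x^2}=-x+\frac{2x}{2-x^2}.
\end{equation*}
Therefore
\begin{equation*}
	\int_0^1\frac{x^3}{2-x^2}\dd x=-\frac12+\Bigl[-\log(2-x^2)\Bigr]_0^1=-\frac12+\log 2.
\end{equation*}
Multiplying by $\frac14$ gives $\frac{\log 2}{4}-\frac18$, matching the statement.

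There is no real obstacle here. The only point needing care is checking that the denominator stays uniformly bounded away from zero, so that the Riemann-sum error is uniform. This is what distinguishes $S_4$ from $S_5$ and $S_6$, where the kernel is singular near the diagonal.
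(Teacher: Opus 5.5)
Your proof is correct and takes essentially the same route as the paper. Both rescale by $j/M$, treat the $+1$ in the denominator as a uniformly negligible perturbation, and recognize a Riemann sum of $x^3/(8-4x^2)$; the paper handles the $+1$ through uniform convergence $\phi_M\to\phi$, while you use the explicit bound $1/(16M^4)$ and a $C^1$ Riemann-sum estimate, which gives the sharper remainder $O(M)$ and also confirms the value $\frac{\log 2}{4}-\frac18$ that the paper only states.
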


\begin{proof}
	See \cref{subsubsec:proof-lemma:S4}.
\end{proof}

\begin{lemma}\label{lemma:S5}
	The following estimate holds:
	\begin{equation*}
		S_5=\frac{1}{4}M^2\log M+\frac{M^2}{4}\biggl(\gamma-\frac{1}{2}-\log 2\biggr)+O(M\log M),\qquad M\tendsto\infty,
	\end{equation*}
	where $\gamma$ is the Euler--Mascheroni constant.
\end{lemma}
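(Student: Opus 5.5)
The plan is to reduce the double sum to single sums of harmonic numbers via partial fractions, and then read off the asymptotics from the expansion of $H_n$ and a Riemann-sum comparison. Throughout, $H_n=\sum_{i=1}^n 1/i$ denotes the $n$th harmonic number, as in \cref{appendix:harmonic-numbers}.

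\textbf{Step 1: collapse the inner sum.} The key identity is
\begin{equation*}
	\frac{jk}{k^2-j^2}=\frac{j}{2}\biggl(\frac{1}{k-j}+\frac{1}{k+j}\biggr).
\end{equation*}
Summing over $k=j+1,\dotsc,M$ evaluates the inner sum exactly:
\begin{equation*}
	S_5=\frac{1}{2}\sum_{j=1}^{M-1} j\bigl(H_{M-j}+H_{M+j}-H_{2j}\bigr).
\end{equation*}

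\textbf{Step 2: expand the harmonic numbers.} Insert $H_n=\log n+\gamma+O(1/n)$ into each of the three terms. The $O(1/n)$ remainders contribute $\sum_j j/(M-j)$, $\sum_j j/(M+j)$ and $\sum_j j/(2j)$, each of which is $O(M\log M)$ or better. The constants $\gamma$ combine with signs $+,+,-$ to give $\frac{\gamma}{2}\sum_{j=1}^{M-1}j=\frac{\gamma}{4}M^2+O(M)$. It therefore remains to evaluate
\begin{equation*}
	\frac12\sum_{j=1}^{M-1} j\log\frac{M^2-j^2}{2j}.
\end{equation*}

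\textbf{Step 3: Riemann-sum comparison.} Write $j=Mx$. The logarithm splits as
\begin{equation*}
	\log\frac{M^2-j^2}{2j}=\log M+\log(1-x^2)-\log 2-\log x .
\end{equation*}
The $\log M$ piece gives $\frac{1}{2}\log M\sum_j j=\frac14 M^2\log M+O(M\log M)$. The remaining pieces form a Riemann sum $M^2\int_0^1 x\bigl(\log(1-x^2)-\log 2-\log x\bigr)\dd x$, and the three integrals are elementary:
\begin{equation*}
	\int_0^1 x\log(1-x^2)\dd x=-\frac12,\qquad \int_0^1 x\log x\dd x=-\frac14,\qquad \int_0^1 x\log 2\dd x=\frac{\log 2}{2}.
\end{equation*}
Hence the integral equals $-\frac14-\frac{\log 2}{2}$. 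Multiplying by $\frac12$ and adding the $\gamma$ term from Step 2 gives
\begin{equation*}
	\frac{M^2}{4}\biggl(\gamma-\frac12-\log 2\biggr),
\end{equation*}
as claimed.

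\textbf{Main obstacle: the error term.} The delicate point is controlling the Riemann-sum error at the logarithmic singularities, which sit at $x=1$ (from $\log(M-j)$) and at $x=0$ (from $\log j$). The cleanest route avoids the integral comparison and instead uses exact closed forms. Concretely, $\sum_{j=1}^{M-1}j\log(M-j)=\sum_{i=1}^{M-1}(M-i)\log i$, which is a combination of $\log (M-1)!$ and $\sum_i i\log i$. Likewise $\sum_j j\log(M+j)$ and $\sum_j j\log(2j)$ reduce to $\log$-factorials and hyperfactorial-type sums $\sum i\log i$. Stirling's formula and the standard expansion
\begin{equation*}
	\sum_{i\le n}i\log i=\frac{n^2}{2}\log n-\frac{n^2}{4}+O(n\log n)
\end{equation*}
then give all terms with error $O(M\log M)$. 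Alternatively, one can apply the closed forms for $\sum_{i\le n}H_i$ and $\sum_{i\le n}iH_i$ from \cref{appendix:harmonic-numbers} directly to the expression in Step 1. Either route should deliver the stated remainder $O(M\log M)$.
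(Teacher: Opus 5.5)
Your proposal is correct, and the constant checks out: $\frac{\gamma}{4}+\frac12\int_0^1 x\log\frac{1-x^2}{2x}\dd x=\frac{\gamma}{4}-\frac18-\frac{\log 2}{4}$. Step~1 is the paper's first step, but after that the two arguments run in opposite orders.

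The paper sums first and expands last. It splits into $\sum_j jH_{M-j}$, $\sum_j jH_{M+j}$ and $\sum_j jH_{2j}$. It rewrites the first two with the exact identities for $\sum H_k$ and $\sum kH_k$ (\cref{lemma:sumHk,lemma:sumkHk}), and treats the third with the even-index identity \cref{lemma:sum2kH2k}, which it proves via \cref{lemma:sumH2k}. Only at the end does it insert $H_n=\log n+\gamma+O(1/n)$. You expand first, paying $\sum_j j/(M-j)=O(M\log M)$ for the remainders, and then evaluate a sum of logarithms.

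What each ordering buys:
\begin{itemize}
\item Yours makes the origin of the $M^2$ coefficient transparent. It also handles $H_{2j}$ for free through $\log(2j)=\log 2+\log j$, so no harmonic-sum identities are needed.
\item The paper's stays exact until the last line, so no Riemann-sum estimate at a singularity ever arises.
\end{itemize}
If you took your ``alternative'' route through the appendix, $\sum H_k$ and $\sum kH_k$ alone do not cover the $H_{2j}$ term. You would need \cref{lemma:sum2kH2k}, or the observation $H_{2j}=H_j+\log 2+O(1/j)$.

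The obstacle you flag is milder than you suggest. The function $x\log(1-x^2)$ is decreasing on $[0,1)$, so the monotone comparison traps $\frac1M\sum_{j=1}^{M-1}$ between $\int_{1/M}^{1}$ and $\int_0^{1-1/M}$. The lost cell near $x=1$ contributes only $O(\log M/M)$. The term $x\log x$ is bounded and of bounded variation on $[0,1]$, so its Riemann-sum error is $O(1/M)$. After multiplying by $M^2$, the direct Riemann-sum route already gives the remainder $O(M\log M)$.

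Your Stirling fallback works too. It gives $\sum_j j\log(M-j)=\frac{M^2}{2}\log M-\frac34M^2+O(M\log M)$ and $\sum_j j\log(M+j)=\frac{M^2}{2}\log M+\frac14M^2+O(M\log M)$, consistent with the integral values above.
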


\begin{proof}
	See \cref{subsubsec:proof-lemma:S5}.
\end{proof}

\begin{lemma}\label{lemma:S6}
	The following estimate holds:
	\begin{equation*}
		S_6=\frac{M^2}{2}\log{2}+o(M^2),\qquad M\tendsto\infty.
	\end{equation*}
\end{lemma}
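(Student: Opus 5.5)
We treat $S_6$ as a Riemann sum. Write $j=Mx$, $k=My$, so that
\begin{equation*}
	\frac{jk}{2M^2-j^2-k^2}=\frac{xy}{2-x^2-y^2},
\end{equation*}
and $S_6=M^2\cdot\frac{1}{M^2}\sum_{j,k=1}^{M-1} f(j/M,k/M)$ with $f(x,y)=xy/(2-x^2-y^2)$ on $[0,1]^2$. The claim is then $S_6=M^2\iint_{[0,1]^2}f+o(M^2)$, followed by evaluating the integral.

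\textbf{Evaluating the integral.} Integrate first in $y$:
\begin{equation*}
	\int_0^1\frac{y\,\dd y}{2-x^2-y^2}=\frac12\log\frac{2-x^2}{1-x^2}.
\end{equation*}
Hence
\begin{equation*}
	\iint f=\frac12\int_0^1 x\bigl(\log(2-x^2)-\log(1-x^2)\bigr)\dd x.
\end{equation*}
Substitute $u=x^2$ to get $\frac14\int_0^1(\log(2-u)-\log(1-u))\dd u$. We have $\int_0^1\log(2-u)\,\dd u=2\log2-1$ and $\int_0^1\log(1-u)\,\dd u=-1$. The integral is therefore $\frac14\cdot 2\log 2=\frac{\log 2}{2}$, matching the claimed constant.

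\textbf{Justifying the Riemann sum.} This is the main obstacle, since $f$ is unbounded near the corner $(1,1)$, where $2-x^2-y^2\approx 2(1-x)+2(1-y)$. The singularity is only logarithmic in the sense that $f\in L^1$: near the corner $f\sim 1/(2(s+t))$ with $s=1-x$, $t=1-y$, which is integrable in two dimensions. The plan has three parts.
\begin{itemize}
	\item Fix $\delta>0$ and split the index set into the region where $j,k\le(1-\delta)M$ and its complement.
	\item On the first region $f$ is smooth with bounded derivatives. By monotonicity, or a standard Lipschitz estimate, the normalized sum converges to the integral over the corresponding region.
	\item On the complement we need a uniform bound. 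For $j,k\le M-1$, write $a=M-j\ge1$ and $b=M-k\ge1$. Then
	\begin{equation*}
		2M^2-j^2-k^2=a(2M-a)+b(2M-b)\ge M(a+b),
	\end{equation*}
	so each term is at most $M/(a+b)$. The points with $a\le\delta M$ or $b\le\delta M$, counted with this bound, contribute at most
	\begin{equation*}
		\sum_{a\le\delta M}\sum_{b\le M}\frac{M}{a+b}\lesssim M\cdot\delta M\log(1/\delta)+M\cdot\delta M\,O(1),
	\end{equation*}
	which is $O(\delta\log(1/\delta)M^2)$ after a little care. Splitting the sum over $b$ into $b\le\delta M$ and $b>\delta M$ gives $O(\delta M^2)+O(\delta\log(1/\delta)M^2)$.
\end{itemize}
The corresponding part of the integral is also $O(\delta\log(1/\delta))$.

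Letting $M\to\infty$ and then $\delta\to0$ gives $S_6=\frac{M^2}{2}\log 2+o(M^2)$.
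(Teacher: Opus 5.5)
Your proof is correct. It shares the paper's overall strategy: write $S_6=\sum_{j,k=1}^{M-1}f(j/M,k/M)$ with $f(x,y)=xy/(2-x^2-y^2)$, show that $S_6/M^2\to\iint_{[0,1]^2}f$, and evaluate the integral as $\frac{\log 2}{2}$. Your explicit evaluation of the integral is right, whereas the paper only calls it a direct computation.

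The difference lies in how you justify the Riemann-sum limit near the singular corner.

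\textbf{The paper's route.} It proves an auxiliary lemma using only that $f$ is nonnegative and increasing in each variable. Comparing each term with the integral over the adjacent cell to its lower left and to its upper right gives the sandwich
\[
\int_0^{1-1/M}\int_0^{1-1/M}f\,\dd x\dd y\le\frac{1}{M^2}\sum_{j,k=1}^{M-1}f\Bigl(\frac jM,\frac kM\Bigr)\le\int_0^1\int_0^1 f\,\dd x\dd y .
\]
Integrability of $f$ then closes the argument.

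\textbf{Your route.} You truncate at distance $\delta$ from the edges $x=1$ and $y=1$, and use ordinary Riemann-sum convergence where $f$ is smooth. You control the L-shaped remainder through the explicit bound $2M^2-j^2-k^2=a(2M-a)+b(2M-b)\ge M(a+b)$, which gives $O(\delta\log(1/\delta)M^2)$. Finally you let $M\to\infty$ before $\delta\to0$.

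\textbf{Comparison.} The paper's argument is shorter and avoids the double limit. It even yields a rate: since $f\le 1/((1-x)+(1-y))$, the missing strip of width $1/M$ has integral $O(\log M/M)$, so $S_6=\frac{\log 2}{2}M^2+O(M\log M)$. It does, however, rely on the monotonicity of $f$. Your argument never uses monotonicity. It would work equally for a non-monotone continuous integrand satisfying a bound of the form $f(x,y)\le C/((1-x)+(1-y))$. The cost is more bookkeeping, and as written you obtain only the $o(M^2)$ conclusion, which is all the lemma requires.
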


\begin{proof}
	See \cref{subsubsec:proof-lemma:S6}.
\end{proof}

\begin{proof}[Proof of \cref{thm:2-energy-diamond}]
	From \cref{cor:expected-energy-4j} and \cref{lemma:S1,lemma:S2,lemma:S3,lemma:S4,lemma:S5,lemma:S6}, we have
	\begin{align*}
		\MoveEqLeft\expectation[\Big]{\theta_1,\dotsc,\theta_{2M-1}\in[0,2\pi]}{\Energy_{2}(\Omega(2M-1,\set{r_j},\set{z_j}))}\\
		&=\frac{64}{3}\biggl(\frac{1}{8}+\frac{\log 2}{4}-\frac{1}{8}\biggr)M^4\\
		&\quad +32M^2\biggl(\frac{1}{4}M^2\log M+\frac{M^2}{4}\biggl(\gamma-\frac{1}{2}-\log 2\biggr)\biggr)\\
		&\quad+8\log(2)M^4+o(M^4)\\
		&=8M^4\log M+\biggl(\frac{16}{3}\log 2+8\gamma-4\biggr)M^4+o(M^4).
	\end{align*}
	Finally, using that $N=4M^2+2$, the theorem follows.
\end{proof}

\section{Technical proofs}\label{sec:technical-proofs}

\subsection{Proof of \cref{prop:energy-p}}\label{subsec:proof-prop:energy-p}

As in \cite[Section 5.2]{BeltranEtayo2020}, to derive the expression for the expected $2$-energy associated with the set $\Omega(p,\set{r_j},\set{z_j})$ we need to compute the following three quantities:
\begin{enumerate}
	\item $A$: The energy between every point and the poles, plus the energy between the poles;
	
	\item $B$: the energy of the scaled roots of unity for every parallel; and
	
	\item $C$: the energy between the points of every pair of parallels.
\end{enumerate}

We compute these quantities in the following subsections.

\subsection*{Computation of the quantity $A$}

The energy between the north pole and the south pole is
\begin{equation*}
	\norm{(0,0,1)-(0,0,-1)}^{-2}=\frac{1}{4},
\end{equation*}
and we have to add this contribution twice. To compute the energy between a generic point $x_{j}^{\ell}$ of our set and the poles, note that
\begin{align*}
	\norm{(0,0,1)-x_j^{\ell}}&=\sqrt{2}\sqrt{1-z_j},\\
	\norm{(0,0,-1)-x_j^{\ell}}&=\sqrt{2}\sqrt{1+z_j}.
\end{align*}
Then,
\begin{equation}\label{eq:A}
	A=\frac{1}{2}+\sum_{j=1}^{p}r_j\frac{1}{1-z_j}+\sum_{j=1}^{p}r_j\frac{1}{1+z_j}=\frac{1}{2}+2\sum_{j=1}^{p}\frac{r_j}{1-z_j^2}.
\end{equation}

\subsection*{Computation of the quantity $B$}

To compute the $2$-energy of the $r_j$ scaled roots of unity on the parallel of height $z_j$, we use the following result.

\begin{lemma}[{see \cite[Theorem 1.1 and Remark 1]{BrauchartHardinSaff2009}}]\label{lemma:2-energy-roots-unity}
	Let $\omega_r$ be a configuration of $r$ equally spaced points on $\S^1$. Then,
	\begin{equation*}
		\Energy_{2}(\omega_r)=\frac{1}{12}r(r^2-1).
	\end{equation*}
\end{lemma}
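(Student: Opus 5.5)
The plan is to reduce to a one-dimensional trigonometric sum and evaluate it in closed form. Place the $r$ points at $x_\ell=(\cos(2\pi\ell/r),\sin(2\pi\ell/r))$. Using $\norm{x_\ell-x_m}^2=2-2\cos(2\pi(\ell-m)/r)=4\sin^2(\pi(\ell-m)/r)$ and the rotational invariance of the configuration, we get
\begin{equation*}
	\Energy_2(\omega_r)=\sum_{\ell\neq m}\frac{1}{4\sin^2(\pi(\ell-m)/r)}=\frac{r}{4}\sum_{k=1}^{r-1}\csc^2\Bigl(\frac{\pi k}{r}\Bigr).
\end{equation*}
So everything reduces to the classical identity
\begin{equation*}
	\sum_{k=1}^{r-1}\csc^2\Bigl(\frac{\pi k}{r}\Bigr)=\frac{r^2-1}{3},
\end{equation*}
which gives $\Energy_2(\omega_r)=\frac{r}{4}\cdot\frac{r^2-1}{3}=\frac{1}{12}r(r^2-1)$, as claimed. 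The case $r=1$ is trivial, since both sides vanish.

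To prove the cosecant identity, I will use the partial fraction expansion of $\csc^2$. Start from $\sin(rx)=2^{r-1}\prod_{k=0}^{r-1}\sin(x+\pi k/r)$ and take logarithmic derivatives twice. This yields
\begin{equation*}
	\frac{r^2}{\sin^2(rx)}=\sum_{k=0}^{r-1}\csc^2\Bigl(x+\frac{\pi k}{r}\Bigr).
\end{equation*}
Subtract the $k=0$ term and let $x\to0$. The Laurent expansion $\csc^2 y=y^{-2}+\tfrac13+O(y^2)$ gives $r^2\bigl((rx)^{-2}+\tfrac13\bigr)-\bigl(x^{-2}+\tfrac13\bigr)\to\frac{r^2-1}{3}$, while the remaining terms tend to $\sum_{k=1}^{r-1}\csc^2(\pi k/r)$.

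An alternative route applies the residue theorem to $\pi\cot(\pi z)\cdot r\cot(\pi z/r)$, or uses the roots of $T_r$-type polynomials. I prefer the product formula because it is self-contained.

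The only step requiring care is the limit computation, specifically getting the constant term $\tfrac13$ in the expansion of $\csc^2$ right. The rest is bookkeeping. Since the paper cites \cite{BrauchartHardinSaff2009}, one may alternatively just invoke the result, but the argument above makes it self-contained.
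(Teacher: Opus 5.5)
Your argument is correct, but it follows a different route from the paper. The paper gives no proof of its own: it quotes \cite[Theorem 1.1 and Remark 1]{BrauchartHardinSaff2009}, where the formula is the $s=2$ case of a general expansion of the Riesz $s$-energy of the $r$th roots of unity in terms of Riemann zeta values, and for $s=2$ that expansion collapses to an exact identity. You instead use $\norm{x_\ell-x_m}^2=4\sin^2(\pi(\ell-m)/r)$ and rotational invariance to reduce to $\frac{r}{4}\sum_{k=1}^{r-1}\csc^2(\pi k/r)$. You then evaluate this classical sum by differentiating $r\cot(rx)=\sum_{k=0}^{r-1}\cot(x+\pi k/r)$ and letting $x\to0$ with $\csc^2y=y^{-2}+\frac13+O(y^2)$. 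Every step checks out; for instance, $r=2,3$ give $1$ and $\frac83$ for the cosecant sum. Your route gives a short, elementary derivation of exactly the case the paper needs. The citation gives generality (all $s$, with complete asymptotics), but with much heavier machinery than this case requires. The one ingredient you take on faith, the product formula for $\sin(rx)$, is itself a one-liner. Write $\sin\theta=\frac{e^{-i\theta}}{2i}(e^{2i\theta}-1)$ and put $w=e^{2ix}$; the formula then reduces to the factorization $w^r-1=\prod_{k=0}^{r-1}(w-e^{-2\pi ik/r})$. Since you only use its logarithmic derivative, the constant $2^{r-1}$ never matters.
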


From \cref{lemma:2-energy-roots-unity}, the $2$-energy of equally spaced points on a circle of radius $R$ can be easily computed:
\begin{equation*}
	\Energy_{2}^{R}(\omega_r)=\frac{1}{R^2}\Energy_{2}(\omega_r).
\end{equation*}
Hence, since the parallel of height $z_j$ is a circle of radius $\sqrt{1-z_j^2}$, we have
\begin{equation}\label{eq:B}
	B=\frac{1}{12}\sum_{j=1}^{p} \frac{r_j(r_j^2-1)}{1-z_j^2}.
\end{equation}

\subsection*{Computation of the quantity $C$}

To obtain an expression for the energy between points on different parallels of the sphere we will need the following lemma.

\begin{lemma}\label{lemma:integral}
	The following equality holds:
	\begin{equation*}
		\int_{0}^{\pi}\frac{1}{1+a\cos\theta}\dd\theta=\frac{\pi}{\sqrt{1-a^2}},\qquad a^2< 1.
	\end{equation*}
\end{lemma}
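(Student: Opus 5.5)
The integral $\int_0^\pi (1+a\cos\theta)^{-1}\dd\theta$ with $\abs{a}<1$ is classical. I would evaluate it by the Weierstrass substitution $t=\tan(\theta/2)$, which maps $[0,\pi)$ bijectively onto $[0,\infty)$. Under this substitution
\begin{equation*}
	\cos\theta=\frac{1-t^2}{1+t^2},\qquad \dd\theta=\frac{2\,\dd t}{1+t^2}.
\end{equation*}
Hence
\begin{equation*}
	1+a\cos\theta=\frac{(1+a)+(1-a)t^2}{1+t^2},
\end{equation*}
and the integral becomes
\begin{equation*}
	\int_0^\infty\frac{2\,\dd t}{(1+a)+(1-a)t^2}.
\end{equation*}

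Since $a^2<1$, both $1+a$ and $1-a$ are positive, so the denominator never vanishes and the integrand is positive and integrable at infinity. I would then substitute $t=\sqrt{(1+a)/(1-a)}\,u$, which turns the integral into
\begin{equation*}
	\frac{2}{\sqrt{(1+a)(1-a)}}\int_0^\infty\frac{\dd u}{1+u^2}=\frac{2}{\sqrt{1-a^2}}\cdot\frac{\pi}{2}=\frac{\pi}{\sqrt{1-a^2}}.
\end{equation*}
This is the claimed value.

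The only point needing care is the endpoint $\theta=\pi$, where $t\to\infty$. The original integrand is continuous on $[0,\pi]$, because $1+a\cos\theta\ge 1-\abs{a}>0$ there. So the improper integral in $t$ equals the proper one in $\theta$, by monotone convergence or simply by taking the limit of $\int_0^{\pi-\varepsilon}$. There is no genuine obstacle.

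An alternative check is the residue theorem. By symmetry the integral equals half of $\int_0^{2\pi}$. With $z=e^{i\theta}$, the integrand has a single pole inside the unit circle, at $z=(-1+\sqrt{1-a^2})/a$ for $a\neq 0$. Evaluating the residue gives $2\pi/\sqrt{1-a^2}$ for the full-period integral. The case $a=0$ is trivial.
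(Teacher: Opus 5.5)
Your proof is correct. The Weierstrass substitution turns the integral into $\int_0^\infty 2\,\dd t/\bigl((1+a)+(1-a)t^2\bigr)$, and the rescaling $t=\sqrt{(1+a)/(1-a)}\,u$ gives exactly $\pi/\sqrt{1-a^2}$. Your residue cross-check is also right: the integrand becomes $2/\bigl(i(az^2+2z+a)\bigr)$, the two roots have product $1$, and the one inside the unit disk has residue $1/(i\sqrt{1-a^2})$.

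The paper gives no derivation at all. It simply refers to formula 3.613-1 in Gradshteyn--Ryzhik, so the two differ only in self-containedness. The citation keeps the exposition short. Your substitution argument makes the lemma checkable in a few lines of elementary calculus, and it treats all $a\in(-1,1)$ uniformly. That includes $a=0$, which the residue computation must handle separately. It also includes negative $a$, which is the case actually used in the proof of \cref{prop:crossed-energy}, where $a=\beta/\alpha<0$.
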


\begin{proof}
	See \cite[3.613-1]{GradshteynRyzhik2007}.
\end{proof}

\begin{proposition}\label{prop:crossed-energy}
	Let $x$ and $y$ be two points sampled independently and uniformly on the parallels of height $z_j$ and $z_k$, respectively, with $z_j\neq z_k$. Then, the expected value of $\norm{x-y}^{-2}$ is
	\begin{equation*}
		\frac{1}{2\abs{z_j-z_k}}.
	\end{equation*}
\end{proposition}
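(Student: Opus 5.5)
Write $x=(\sqrt{1-z_j^2}\cos\phi,\sqrt{1-z_j^2}\sin\phi,z_j)$ and $y=(\sqrt{1-z_k^2}\cos\psi,\sqrt{1-z_k^2}\sin\psi,z_k)$, with $\phi,\psi$ independent and uniform on $[0,2\pi)$. Since both points lie on $\S^2$, we have $\norm{x-y}^2=2-2\innerprod{x}{y}$. This gives
\begin{equation*}
	\norm{x-y}^2=2-2z_jz_k-2\sqrt{1-z_j^2}\sqrt{1-z_k^2}\cos(\phi-\psi).
\end{equation*}
By rotational invariance, the difference $\theta=\phi-\psi$ is again uniform modulo $2\pi$. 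Because the integrand is even in $\theta$, the expectation reduces to
\begin{equation*}
	\frac{1}{\pi}\int_0^{\pi}\frac{\dd\theta}{2(1-z_jz_k)-2\sqrt{(1-z_j^2)(1-z_k^2)}\cos\theta}.
\end{equation*}

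Next I factor out $2(1-z_jz_k)$, which is positive since $\abs{z_j},\abs{z_k}<1$. The integral then has the form of \cref{lemma:integral} with
\begin{equation*}
	a=-\frac{\sqrt{(1-z_j^2)(1-z_k^2)}}{1-z_jz_k}.
\end{equation*}
I need to check that $a^2<1$. This follows from the identity
\begin{equation*}
	(1-z_jz_k)^2-(1-z_j^2)(1-z_k^2)=(z_j-z_k)^2,
\end{equation*}
whose right-hand side is strictly positive because $z_j\neq z_k$. Verifying this identity is the only real computation in the proof. It also gives
\begin{equation*}
	\sqrt{1-a^2}=\frac{\abs{z_j-z_k}}{1-z_jz_k}.
\end{equation*}

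Applying \cref{lemma:integral}, the expectation equals
\begin{equation*}
	\frac{1}{\pi}\cdot\frac{1}{2(1-z_jz_k)}\cdot\frac{\pi(1-z_jz_k)}{\abs{z_j-z_k}}=\frac{1}{2\abs{z_j-z_k}},
\end{equation*}
which is the claimed formula.

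There is no serious obstacle. The only points needing care are the reduction of the double average to a single average over $[0,\pi]$, via uniformity of $\phi-\psi$ modulo $2\pi$ and evenness of cosine, and the sign of $a$. The sign does not matter, since \cref{lemma:integral} depends only on $a^2$; alternatively, the substitution $\theta\mapsto\pi-\theta$ flips the sign of $a$.
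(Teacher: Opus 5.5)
Your proof is correct and follows essentially the same route as the paper: write $\norm{x-y}^2=2-2\innerprod{x}{y}$ in terms of a single uniform angle, fold the integral onto $[0,\pi]$ by evenness, and apply \cref{lemma:integral} with $a=\beta/\alpha$. Your identity $(1-z_jz_k)^2-(1-z_j^2)(1-z_k^2)=(z_j-z_k)^2$ spells out two steps the paper leaves to the reader, namely the check $(\beta/\alpha)^2<1$ and the simplification $\sqrt{\alpha^2-\beta^2}=\abs{z_j-z_k}$.
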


\begin{proof}
	By rotational invariance, we may fix
	\begin{equation*}
		x=\Bigl(\sqrt{1-z_j^2},0,z_j\Bigr)
	\end{equation*}
	and write
	\begin{equation*}
		y=\Bigl(\sqrt{1-z_k^2}\cos\theta,\sin\theta,z_k\Bigr),
	\end{equation*}
	where $\theta$ is uniformly distributed in $[0,2\pi]$. Then,
	\begin{equation*}
		\norm{x-y}^2=2-2\innerprod{x}{y}=2\Bigl(1-z_jz_k-\sqrt{1-z_j^2}\sqrt{1-z_k^2}\cos\theta\Bigr).
	\end{equation*}
	Hence,
	\begin{equation*}
		\expectation[\Big]{\theta\in[0,2\pi]}{\norm{x-y}^{-2}}
		=
		\frac{1}{2\pi}\int_0^{2\pi}
		\frac{1}{2\Bigl(1-z_jz_k-\sqrt{1-z_j^2}\sqrt{1-z_k^2}\cos\theta\Bigr)}\dd\theta.
	\end{equation*}
	Using the symmetry of the integrand, this becomes
	\begin{equation*}
		\expectation[\Big]{\theta\in[0,2\pi]}{\norm{x-y}^{-2}}
		=\frac{1}{2\pi}\int_{0}^{\pi}\frac{1}{1-z_jz_k-\sqrt{1-z_j^2}\sqrt{1-z_k^2}\cos\theta}\dd\theta.
	\end{equation*}
	Set
	\begin{equation*}
		\alpha=1-z_jz_k,\qquad \beta=-\sqrt{1-z_j^2}\sqrt{1-z_k^2}.
	\end{equation*}
	Then,
	\begin{equation*}
		\expectation[\Big]{\theta\in[0,2\pi]}{\norm{x-y}^{-2}}
		=\frac{1}{2\pi}\int_{0}^{\pi}\frac{1}{\alpha+\beta\cos\theta}\dd\theta
		=\frac{1}{2\pi\alpha}\int_{0}^{\pi}\frac{1}{1+(\beta/\alpha)\cos\theta}\dd\theta.
	\end{equation*}
	One readily verifies that $(\beta/\alpha)^2<1$. Then, from \cref{lemma:integral} we have
	\begin{equation*}
		\expectation[\Big]{\theta\in[0,2\pi]}{\norm{x-y}^{-2}}=\frac{1}{2\alpha\sqrt{1-\beta^2/\alpha^2}}
		=\frac{1}{2\sqrt{\alpha^2-\beta^2}}
		=\frac{1}{2\abs{z_j-z_k}}.\qedhere
	\end{equation*}
\end{proof}

The following result is a direct consequence of \cref{prop:crossed-energy}.

\begin{corollary}\label{cor:energy-between-parallels}
	Let $x_j^{\ell}$ be as in \cref{def:Omega}. Then, for $j\neq k$,
	\begin{equation*}
		\expectation[\Big]{\theta_j,\theta_k\in[0,2\pi]}{\sum_{\ell=1}^{r_j}\sum_{m=1}^{r_k}\norm{x_j^{\ell}-x_k^{m}}^{-2}}=\frac{r_jr_k}{2\abs{z_j-z_k}},
	\end{equation*}
	where $\theta_j$ and $\theta_k$ are uniformly distributed in $[0,2\pi]$.
\end{corollary}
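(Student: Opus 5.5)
The plan is to reduce the statement to \cref{prop:crossed-energy} by writing the double sum as a sum of $r_jr_k$ single-pair expectations and applying linearity. Write $x_j^{\ell}$ in terms of $\theta_j$ and $x_k^{m}$ in terms of $\theta_k$, as in \cref{def:Omega}. Linearity of expectation gives
\begin{equation*}
	\expectation[\Big]{\theta_j,\theta_k\in[0,2\pi]}{\sum_{\ell=1}^{r_j}\sum_{m=1}^{r_k}\norm{x_j^{\ell}-x_k^{m}}^{-2}}=\sum_{\ell=1}^{r_j}\sum_{m=1}^{r_k}\expectation[\Big]{\theta_j,\theta_k\in[0,2\pi]}{\norm{x_j^{\ell}-x_k^{m}}^{-2}}.
\end{equation*}
Interchanging sum and expectation is harmless, since all sums are finite. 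Each summand is also finite: for $j\neq k$ we have $z_j\neq z_k$, so the two parallels are disjoint and the integrand is bounded.

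The key step is to check that, for each fixed pair $(\ell,m)$, the random points $x_j^{\ell}$ and $x_k^{m}$ satisfy the hypotheses of \cref{prop:crossed-energy}.
\begin{enumerate}
	\item The phases $\theta_j$ and $\theta_k$ are independent and uniform on $[0,2\pi]$. Hence the angle $2\pi\ell/r_j+\theta_j$, reduced modulo $2\pi$, is again uniform, because a fixed shift preserves the uniform distribution on the circle.
	\item Therefore $x_j^{\ell}$ is uniformly distributed on the parallel of height $z_j$. By the same argument $x_k^{m}$ is uniform on the parallel of height $z_k$.
	\item The two points are independent, since they are functions of the independent variables $\theta_j$ and $\theta_k$.
\end{enumerate}
\Cref{prop:crossed-energy} then gives each summand the value $1/(2\abs{z_j-z_k})$. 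Summing over the $r_jr_k$ pairs yields $r_jr_k/(2\abs{z_j-z_k})$.

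If one prefers not to rely on the independence of $x_j^{\ell}$ and $x_k^{m}$, there is a more direct route. The distance $\norm{x_j^{\ell}-x_k^{m}}$ depends only on the angular difference $(2\pi\ell/r_j+\theta_j)-(2\pi m/r_k+\theta_k)$. For fixed $\theta_k$, this difference is uniform modulo $2\pi$ when $\theta_j$ is uniform. So one can integrate first in $\theta_j$ using the computation in the proof of \cref{prop:crossed-energy}, which depends only on $\cos$ of this uniform angle, and then integrate trivially in $\theta_k$.

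There is no real obstacle. The only point that needs care is the observation that a deterministic rotation by $2\pi\ell/r_j$ does not change the uniform distribution of the phase.
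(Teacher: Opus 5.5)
Your proposal is correct and matches the paper, which presents this corollary as a direct consequence of \cref{prop:crossed-energy}. The implicit argument is the one you give: use linearity of expectation over the $r_jr_k$ pairs, note that each rotated point $x_j^{\ell}$ (resp.\ $x_k^{m}$) is uniform on its parallel, and sum the single-pair value $1/(2\abs{z_j-z_k})$. You rightly treat $z_j\neq z_k$ for $j\neq k$ as an assumption: \cref{def:Omega} does not state it explicitly, but the right-hand side requires it.
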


From \cref{cor:energy-between-parallels}, we have
\begin{equation}\label{eq:C}
	C=\sum_{\substack{j,k=1\\ j\neq k}}^{p}\frac{r_jr_k}{2\abs{z_j-z_k}}.
\end{equation}

\begin{proof}[Proof of \cref{prop:energy-p}]
	Since
	\begin{equation*}
		\expectation[\Big]{\theta_1,\dotsc,\theta_p\in[0,2\pi]}{\Energy_{2}(\Omega(p,\set{r_j},\set{z_j}))}=A+B+C,
	\end{equation*}
	the proposition follows by combining \cref{eq:A,eq:B,eq:C}.
\end{proof}

\subsection{Proof of \cref{cor:energy-M}}\label{subsec:proof-cor:energy-M}

From \cref{prop:energy-p}, we have
\begin{align*}
	\MoveEqLeft\expectation[\Big]{\theta_1,\dotsc,\theta_{2M-1}\in[0,2\pi]}{\Energy_{2}(\Omega(2M-1,\set{r_j},\set{z_j}))}\\
	&=\frac{1}{2}+\frac{23}{12}\sum_{j=1}^{2M-1}\frac{r_j}{1-z_j^2}+\frac{1}{12}\sum_{j=1}^{2M-1} \frac{r_j^3}{1-z_j^2}
	+\frac{1}{2}\sum_{\substack{j,k=1\\ j\neq k}}^{2M-1}\frac{r_jr_k}{\abs{z_j-z_k}}\\
	&=\frac{1}{2}+\frac{23}{12}r_M+\frac{1}{12}r_M^3+\frac{23}{6}\sum_{j=1}^{M-1}\frac{r_j}{1-z_j^2}+\frac{1}{6}\sum_{j=1}^{M-1} \frac{r_j^3}{1-z_j^2}
	+\frac{1}{2}\sum_{\substack{j,k=1\\ j\neq k}}^{2M-1}\frac{r_jr_k}{\abs{z_j-z_k}}.
\end{align*}
To complete the proof of \cref{cor:energy-M}, it remains to simplify the double sum in the previous expression. We have
\begin{align*}
	\sum_{\substack{j,k=1\\ j\neq k}}^{2M-1}\frac{r_jr_k}{\abs{z_j-z_k}}&=\sum_{j=1}^{2M-2}\sum_{k=j+1}^{2M-1}\frac{r_jr_k}{z_j-z_k}+\sum_{j=2}^{2M-1}\sum_{k=1}^{j-1}\frac{r_jr_k}{z_k-z_j}\\
	&=2\sum_{j=1}^{2M-2}\sum_{k=j+1}^{2M-1}\frac{r_jr_k}{z_j-z_k}.
\end{align*}
To simplify this expression, we decompose the sum as follows:
\begin{align*}
	\sum_{j=1}^{2M-2}\sum_{k=j+1}^{2M-1}\frac{r_jr_k}{z_j-z_k}
	&=\sum_{j=1}^{M-1}\sum_{k=j+1}^{2M-1}\frac{r_jr_k}{z_j-z_k}
	+\sum_{j=M}^{2M-2}\sum_{k=j+1}^{2M-1}\frac{r_jr_k}{z_j-z_k}\\
	&=\sum_{j=1}^{M-1}\sum_{k=j+1}^{M}\frac{r_jr_k}{z_j-z_k}
	+\sum_{j=1}^{M-1}\sum_{k=M+1}^{2M-1}\frac{r_jr_k}{z_j-z_k}\\
	&\quad +\sum_{j=M}^{2M-2}\sum_{k=j+1}^{2M-1}\frac{r_jr_k}{z_j-z_k}.
\end{align*}
We have
\begin{equation*}
	\sum_{j=1}^{M-1}\sum_{k=M+1}^{2M-1}\frac{r_jr_k}{z_j-z_k}=	\sum_{j=1}^{M-1}\sum_{k=1}^{M-1}\frac{r_jr_k}{z_j+z_k},
\end{equation*}
and
\begin{equation*}
	\sum_{j=M}^{2M-2}\sum_{k=j+1}^{2M-1}\frac{r_jr_k}{z_j-z_k}=\sum_{j=2}^{M}\sum_{k=2M-j+1}^{2M-1}\frac{r_{j}r_k}{-z_{j}-z_k}=\sum_{j=2}^{M}\sum_{k=1}^{j-1}\frac{r_{j}r_k}{z_k-z_{j}}.
\end{equation*}
Up to this point, we have proved that
\begin{equation*}
	\frac{1}{2}\sum_{\substack{j,k=1\\ j\neq k}}^{2M-1}\frac{r_jr_k}{\abs{z_j-z_k}}
	=\sum_{j=1}^{M-1}\sum_{k=j+1}^{M}\frac{r_jr_k}{z_j-z_k}
	+\sum_{j=1}^{M-1}\sum_{k=1}^{M-1}\frac{r_jr_k}{z_j+z_k}
	+\sum_{j=2}^{M}\sum_{k=1}^{j-1}\frac{r_{j}r_k}{z_k-z_{j}}.
\end{equation*}
Since
\begin{equation*}
	\sum_{j=2}^{M}\sum_{k=1}^{j-1}\frac{r_{j}r_k}{z_k-z_{j}}=\sum_{j=1}^{M-1}\sum_{k=j+1}^{M}\frac{r_{j}r_k}{z_j-z_{k}},
\end{equation*}
it follows that
\begin{equation*}
	\frac{1}{2}\sum_{\substack{j,k=1\\ j\neq k}}^{2M-1}\frac{r_jr_k}{\abs{z_j-z_k}}
	=2\sum_{j=1}^{M-1}\sum_{k=j+1}^{M}\frac{r_jr_k}{z_j-z_k}
	+\sum_{j=1}^{M-1}\sum_{k=1}^{M-1}\frac{r_jr_k}{z_j+z_k}.
\end{equation*}
This proves the corollary.\qed

\subsection{Proofs of the auxiliary lemmas in \cref{sec:energy-4j}}\label{subsec:proofs-sec:energy-4j}

\subsubsection{Proof of \cref{lemma:S1}}\label{subsubsec:proof-lemma:S1}

We have
\begin{equation*}
	S_1=\sum_{j=1}^{M-1}\frac{j}{1+4j^2}\leq\frac{1}{4}\sum_{j=1}^{M-1}\frac{1}{j}=\frac{1}{4}H_{M-1}=O(\log M),
\end{equation*}
where $H_{M-1}$ denotes the $(M-1)$th harmonic number.\qed

\subsubsection{Proof of \cref{lemma:S2}}\label{subsubsec:proof-lemma:S2}

We have
\begin{equation*}
	S_2=\sum_{j=1}^{M-1}\frac{j}{1+8M^2-4j^2}\leq \frac{1}{4M^2}\sum_{j=1}^{M-1}j=O(1).\pushQED{\qed}\qedhere
\end{equation*}

\subsubsection{Proof of \cref{lemma:S3}}\label{subsubsec:proof-lemma:S3}

We have
\begin{equation*}
	S_3=\sum_{j=1}^{M-1} \frac{j^3}{1+4j^2}.
\end{equation*}
Note that
\begin{equation*}
	\frac{j^3}{1+4j^2}=\frac{1}{4}\biggl(j-\frac{j}{1+4j^2}\biggr).
\end{equation*}
Then,
\begin{equation*}
	S_3=\frac{1}{4}\sum_{j=1}^{M-1}j-\frac{1}{4}S_1=\frac{M^2}{8}+O(M),
\end{equation*}
where we have used \cref{lemma:S1}.\qed

\subsubsection{Proof of \cref{lemma:S4}}\label{subsubsec:proof-lemma:S4}

We have
\begin{equation*}
	S_4=\sum_{j=1}^{M-1} \frac{j^3}{1+8M^2-4j^2}=\sum_{j=0}^{M-1} \frac{j^3}{1+8M^2-4j^2}.
\end{equation*}
Note that
\begin{equation*}
	\frac{S_4}{M^2}=\frac{1}{M}\sum_{j=0}^{M-1}\phi_M\biggl(\frac{j}{M}\biggr),
\end{equation*}
where
\begin{equation*}
	\phi_M(x)=\frac{x^3}{8-4x^2+1/M^2}.
\end{equation*}
Define 
\begin{equation*}
	\phi(x)=\frac{x^3}{8-4x^2},\qquad x\in[0,1].
\end{equation*}
Observe that $\phi_M$ converges uniformly to $\phi$ as $M\tendsto\infty$. Therefore,
\begin{equation*}
	\abs[\bigg]{\frac{1}{M}\sum_{j=0}^{M-1}\biggl(\phi_M\biggl(\frac{j}{M}\biggr)-\phi\biggl(\frac{j}{M}\biggr)\biggr)}\leq \norm{\phi_M-\phi}_{\infty}\xrightarrow[M\tendsto\infty]{}0.
\end{equation*}
Hence,
\begin{equation*}
	\frac{S_4}{M^2}=\frac{1}{M}\sum_{j=0}^{M-1}\phi\biggl(\frac{j}{M}\biggr)+o(1)\quad \text{as $M\tendsto\infty$}.
\end{equation*}
Since $\phi$ is continuous on $[0,1]$ and 
\begin{equation*}
	\frac{1}{M}\sum_{j=0}^{M-1}\phi\biggl(\frac{j}{M}\biggr)
\end{equation*}
is a Riemann sum for $\phi$ on that interval, we obtain
\begin{equation*}
	\lim_{M\tendsto\infty}\frac{1}{M}\sum_{j=0}^{M-1}\phi\biggl(\frac{j}{M}\biggr)=\int_{0}^{1}\phi(x)\dd x=\int_{0}^{1}\frac{x^3}{8-4x^2}\dd x=\frac{\log 2}{4}-\frac{1}{8}.
\end{equation*}
Therefore,
\begin{equation*}
	\lim_{M\tendsto\infty}\frac{S_4}{M^2}=\frac{\log 2}{4}-\frac{1}{8}.\pushQED{\qed}\qedhere
\end{equation*}

\subsubsection{Proof of \cref{lemma:S5}}\label{subsubsec:proof-lemma:S5}

We have
\begin{equation*}
	S_5=\sum_{j=1}^{M-1}\sum_{k=j+1}^{M}\frac{jk}{k^2-j^2}.
\end{equation*}
Note that
\begin{equation*}
	\frac{jk}{k^2-j^2}=\frac{j}{2}\biggl(\frac{1}{k-j}+\frac{1}{k+j}\biggr).
\end{equation*}
Since
\begin{equation*}
	\sum_{k=j+1}^{M}\frac{1}{k-j}+\sum_{k=j+1}^{M}\frac{1}{k+j}=\sum_{k=1}^{M-j}\frac{1}{k}+\sum_{k=2j+1}^{M+j}\frac{1}{k}=H_{M-j}+H_{M+j}-H_{2j},
\end{equation*}
where $H_n$ denotes the $n$th harmonic number (see \cref{appendix:harmonic-numbers}), we have
\begin{equation}\label{eq:S5-sums}
	S_5=\frac{1}{2}\sum_{j=1}^{M-1}j\Bigl(H_{M-j}+H_{M+j}-H_{2j}\Bigr)
	=\frac{1}{2}\Bigl(S_5^{(1)}+S_5^{(2)}-S_5^{(3)}\Bigr),
\end{equation}
where
\begin{align*}
	S_5^{(1)}&=\sum_{j=1}^{M-1}jH_{M-j},\\
	S_5^{(2)}&=\sum_{j=1}^{M-1}jH_{M+j},\\
	S_5^{(3)}&=\sum_{j=1}^{M-1}jH_{2j}.
\end{align*}
From \cref{lemma:sumHk,lemma:sumkHk} and \cref{eq:asymptotic-harmonic-number}, we have
\begin{align*}
	S_5^{(1)}&=\sum_{j=1}^{M-1}jH_{M-j}\\
	&=\sum_{j=1}^{M-1}(M-j)H_{j}\\
	&=M\sum_{j=1}^{M-1}H_j-\sum_{j=1}^{M-1}jH_j\\
	&=\frac{1}{2}M^2\log M+\biggl(\frac{\gamma}{2}-\frac{3}{4}\biggr)M^2+O(M\log M).
\end{align*}
Similarly,
\begin{align*}
	S_5^{(2)}&=\sum_{j=1}^{M-1}jH_{M+j}\\
	&=\sum_{j=M+1}^{2M-1}(j-M)H_{j}\\
	&=\sum_{j=1}^{2M-1}(j-M)H_{j}-\sum_{j=1}^{M}(j-M)H_{j}\\
	&=\sum_{j=1}^{2M-1}jH_j-M\sum_{j=1}^{2M-1}H_j-\sum_{j=1}^{M}jH_j+M\sum_{j=1}^{M}H_j\\
	&=\frac{1}{2}M^2\log M+M^2\biggl(\frac{\gamma}{2}+\frac{1}{4}\biggr)+O(M\log M).
\end{align*}
Finally, from \cref{lemma:sum2kH2k} and \cref{eq:asymptotic-harmonic-number}, we have
\begin{align*}
	S_5^{(3)}&=\frac{1}{2}\sum_{j=1}^{M-1}2jH_{2j}\\
	&=\frac{1}{2}M^2\log M+\biggl(\frac{\gamma}{2}+\frac{\log 2}{2}-\frac{1}{4}\biggr)M^2+O(M\log M).
\end{align*}
From \eqref{eq:S5-sums}, combining the previous expressions the lemma follows. \qed

To prove \cref{lemma:S6} we will use the following auxiliary result.

\begin{lemma}\label{lemma:aux-lemma-integral}
	Let $f\from [0,1)^2\to \R$ be nonnegative, continuous, and increasing in both variables. Assume that $f$ has an integrable singularity in $(1,1)$. Then,
	\begin{equation*}
		\frac{1}{M^2}\sum_{j=1}^{M-1}\sum_{k=1}^{M-1}f\biggl(\frac{j}{M},\frac{k}{M}\biggr)\tendsto\int_{0}^{1}\int_{0}^{1}f(x,y)\dd x\dd y\quad \text{as $M\tendsto\infty$}.
	\end{equation*}
\end{lemma}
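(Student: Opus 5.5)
The plan is to compare the double Riemann sum with the integral over the corresponding grid squares, using monotonicity in both variables, and to truncate away from the singular corner $(1,1)$. Write $Q_{j,k}=[(j-1)/M,j/M]\times[(k-1)/M,k/M]$. Since $f$ is increasing in each variable, for $(x,y)\in Q_{j,k}$ we have $f(x,y)\le f(j/M,k/M)$. Therefore
\begin{equation*}
	\frac{1}{M^2}\sum_{j=1}^{M-1}\sum_{k=1}^{M-1}f\Bigl(\frac{j}{M},\frac{k}{M}\Bigr)\geq \iint_{[0,1-1/M]^2}f(x,y)\dd x\dd y .
\end{equation*}
Because $f\ge 0$ is integrable on $[0,1)^2$, monotone convergence gives that the right-hand side tends to $\int_0^1\int_0^1 f$. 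This yields the $\liminf$ inequality.

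For the reverse inequality, use the squares $Q'_{j,k}=[j/M,(j+1)/M]\times[k/M,(k+1)/M]$. On these squares $f(j/M,k/M)\le f(x,y)$, and each such square lies in $[0,1]^2$ for $1\le j,k\le M-1$. Summing gives
\begin{equation*}
	\frac{1}{M^2}\sum_{j,k=1}^{M-1}f\Bigl(\frac{j}{M},\frac{k}{M}\Bigr)\le \iint_{[1/M,1]^2}f\le\int_0^1\int_0^1 f .
\end{equation*}
Here we use that $f\ge0$ and that the only singularity is at the corner, so the integral over the closed square is the same as over $[0,1)^2$. This gives the $\limsup$ inequality, and combining the two bounds proves the lemma.

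The main point to check is the interpretation of the hypotheses. \enquote{Integrable singularity at $(1,1)$} should mean $f\in L^1([0,1)^2)$. Continuity on $[0,1)^2$ only guarantees Riemann integrability on compact subsquares, and the argument above uses nothing beyond monotonicity and $L^1$, so no uniform continuity near the corner is required. The edges $x=1$ or $y=1$ away from the corner are harmless: the sample points never reach $1$, and the upper bound uses only the integral. So I expect no real obstacle; the one step that needs care is the first inequality, namely that the grid squares $Q_{j,k}$ for $j,k\le M-1$ exactly tile $[0,1-1/M]^2$ and stay inside the region where $f$ is finite.
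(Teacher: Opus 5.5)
Your proposal is correct and follows the paper's proof essentially step for step. You use the same two families of grid squares to get the lower bound $\iint_{[0,1-1/M]^2} f$ and the upper bound $\iint_{[1/M,1]^2} f \le \int_0^1\int_0^1 f$, and you close the argument by integrability at the corner; you state the limit step explicitly via monotone convergence, where the paper simply invokes the squeeze theorem.
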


\begin{proof}
	Since $f$ is increasing in both variables, we have
	\begin{equation*}
		\int_{(j-1)/M}^{j/M}\int_{(k-1)/M}^{k/M}f(x,y)\dd x\dd y\leq \frac{1}{M^2}f\biggl(\frac{j}{M},\frac{k}{M}\biggr).
	\end{equation*}
	Then,
	\begin{equation*}
		\sum_{j=1}^{M-1}\sum_{k=1}^{M-1}\int_{(j-1)/M}^{j/M}\int_{(k-1)/M}^{k/M}f(x,y)\leq \sum_{j=1}^{M-1}\sum_{k=1}^{M-1}\frac{1}{M^2}f\biggl(\frac{j}{M},\frac{k}{M}\biggr),
	\end{equation*}
	that is,
	\begin{equation*}
		\int_{0}^{1-1/M}\int_{0}^{1-1/M}f(x,y)\dd x\dd y \leq \frac{1}{M^2}\sum_{j=1}^{M-1}\sum_{k=1}^{M-1}f\biggl(\frac{j}{M},\frac{k}{M}\biggr).
	\end{equation*}
	Similarly, since
	\begin{equation*}
		\frac{1}{M^2}f\biggl(\frac{j}{M},\frac{k}{M}\biggr)\leq \int_{j/M}^{(j+1)/M}\int_{k/M}^{(k+1)/M}f(x,y)\dd x\dd y,
	\end{equation*}
	we have
	\begin{equation*}
		\frac{1}{M^2}\sum_{j=1}^{M-1}\sum_{k=1}^{M-1}f\biggl(\frac{j}{M},\frac{k}{M}\biggr)\leq \int_{1/M}^{1}\int_{1/M}^{1}f(x,y)\dd x\dd y\leq \int_{0}^{1}\int_{0}^{1}f(x,y)\dd x\dd y.
	\end{equation*}
	Hence, we obtain
	\begin{equation*}
		\int_{0}^{1-1/M}\int_{0}^{1-1/M}f(x,y)\dd x\dd y \leq\frac{1}{M^2}\sum_{j=1}^{M-1}\sum_{k=1}^{M-1}f\biggl(\frac{j}{M},\frac{k}{M}\biggr)\leq \int_{0}^{1}\int_{0}^{1}f(x,y)\dd x\dd y.
	\end{equation*}
	Finally, since $f$ is integrable in $(1,1)$, the lemma follows by using the squeeze theorem.
\end{proof}
	
\subsubsection{Proof of \cref{lemma:S6}}\label{subsubsec:proof-lemma:S6}

We have
\begin{align*}
	S_6&=\sum_{j=1}^{M-1}\sum_{k=1}^{M-1}\frac{jk}{2M^2-j^2-k^2}\\
	&=\sum_{j=1}^{M-1}\sum_{k=1}^{M-1}\frac{(j/M)(k/M)}{2-(j/M)^2-(k/M)^2}\\
	&=\sum_{j=1}^{M-1}\sum_{k=1}^{M-1}f\biggl(\frac{j}{M},\frac{k}{M}\biggr),
\end{align*}	
where
\begin{equation*}
	f(x,y)=\frac{xy}{2-x^2-y^2}.
\end{equation*}
Using \cref{lemma:aux-lemma-integral}, we have
\begin{equation*}
	\lim_{M\to\infty}\frac{S_6}{M^2}
	=\int_0^1\int_0^1f(x,y)\dd x\dd y.
\end{equation*}
Finally, a direct computation yields
\begin{equation*}
	\int_0^1\int_0^1f(x,y)\dd x\dd y=\int_0^1\int_0^1\frac{xy}{2-x^2-y^2}\dd x\dd y
	=\frac{\log 2}{2},
\end{equation*}
which completes the proof. \qed

\appendix

\section{Harmonic numbers}\label{appendix:harmonic-numbers}

The \emph{$n$th harmonic number $H_n$} is defined as the sum of the reciprocals of the first $n$ natural numbers:
\begin{equation*}
	H_n\defined \sum_{k=1}^{n}\frac{1}{k}.
\end{equation*}
Harmonic numbers satisfy the following recurrence relation:
\begin{equation}\label{eq:harmonic-recurrence}
	H_{n+1}=H_n+\frac{1}{n+1}.
\end{equation}
The following asymptotic expansion for the harmonic numbers is also well known; see, for example, \cite[Eq.~(9.28)]{GrahamKnuthPatashnik1994}:
\begin{equation}\label{eq:asymptotic-harmonic-number}
	H_n=\log n+\gamma+O(1/n),
\end{equation}
where $\gamma$ is the Euler--Mascheroni constant.
 
\subsection*{Some finite sums involving harmonic numbers}

Of the following four results, the first two are well known; see \cite[Eqs. (6.67) and (6.68)]{GrahamKnuthPatashnik1994}. For the remaining two we provide a proof.

\begin{lemma}\label{lemma:sumHk}
	The following equality holds:
	\begin{equation*}
		\sum_{k=1}^{n}H_k=(n+1)H_n-n.
	\end{equation*}
\end{lemma}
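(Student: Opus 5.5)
The identity $\sum_{k=1}^{n}H_k=(n+1)H_n-n$ is elementary, and I would prove it by induction on $n$, using only the definition of $H_n$ and the recurrence \eqref{eq:harmonic-recurrence}.

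\emph{Base case.} For $n=1$ the left-hand side is $H_1=1$. The right-hand side is $2H_1-1=1$, so the two sides agree.

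\emph{Inductive step.} Assume the identity holds for some $n\geq 1$. Then
\begin{equation*}
	\sum_{k=1}^{n+1}H_k=(n+1)H_n-n+H_{n+1}.
\end{equation*}
By \eqref{eq:harmonic-recurrence}, $(n+1)H_n=(n+1)H_{n+1}-1$. Substituting this gives
\begin{equation*}
	(n+1)H_{n+1}-1-n+H_{n+1}=(n+2)H_{n+1}-(n+1),
\end{equation*}
which is exactly the claimed formula with $n$ replaced by $n+1$.

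As a cross-check, one can also exchange the order of summation directly:
\begin{equation*}
	\sum_{k=1}^{n}H_k=\sum_{k=1}^{n}\sum_{m=1}^{k}\frac{1}{m}=\sum_{m=1}^{n}\frac{n+1-m}{m}=(n+1)H_n-n.
\end{equation*}
Either route suffices. The only step needing any care is the substitution via the recurrence in the inductive step, and that is immediate, so I do not expect a genuine obstacle.
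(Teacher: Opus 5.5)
Your proof is correct and complete. The induction step via the recurrence and the interchange-of-summation cross-check are both valid. The paper does not prove this identity itself; it cites it as well known from Graham--Knuth--Patashnik (Eq.~(6.67)), so your argument supplies the self-contained verification the paper omits, and your summation-interchange version is essentially the classical derivation.
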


\begin{lemma}\label{lemma:sumkHk}
	The following equality holds:
	\begin{equation*}
		\sum_{k=1}^{n}kH_k=\frac{n(n+1)}{2}H_n-\frac{n(n-1)}{4}.
	\end{equation*}
\end{lemma}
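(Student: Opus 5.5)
The natural route is induction on $n$, using the recurrence \eqref{eq:harmonic-recurrence}, which is the same mechanism that underlies \cref{lemma:sumHk}. Write $T_n=\sum_{k=1}^{n}kH_k$ and $R_n=\frac{n(n+1)}{2}H_n-\frac{n(n-1)}{4}$. We will show that $T_n=R_n$ for all $n\geq 1$.

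For the base case $n=1$, we have $T_1=H_1=1$ and $R_1=\frac{1\cdot 2}{2}\cdot 1-0=1$, so the identity holds.

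For the inductive step, we assume $T_n=R_n$ and check that $R_{n+1}-R_n=(n+1)H_{n+1}$. Substitute $H_{n+1}=H_n+\frac{1}{n+1}$ into $R_{n+1}$. This gives
\begin{equation*}
R_{n+1}=\frac{(n+1)(n+2)}{2}H_n+\frac{n+2}{2}-\frac{(n+1)n}{4}.
\end{equation*}
Subtracting $R_n$, the coefficient of $H_n$ is $\frac{(n+1)(n+2)-n(n+1)}{2}=n+1$. The constant terms are $\frac{n+2}{2}-\frac{n(n+1)}{4}+\frac{n(n-1)}{4}=\frac{n+2}{2}-\frac{n}{2}=1$. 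Hence
\begin{equation*}
R_{n+1}-R_n=(n+1)H_n+1=(n+1)\Bigl(H_n+\frac{1}{n+1}\Bigr)=(n+1)H_{n+1},
\end{equation*}
which equals $T_{n+1}-T_n$, and the induction closes.

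As an alternative, one could derive the identity directly. Swap the order of summation, $\sum_{k=1}^{n}k\sum_{i\leq k}\frac{1}{i}=\sum_{i=1}^{n}\frac{1}{i}\sum_{k=i}^{n}k$, and then use $\sum_{k=i}^{n}k=\frac{n(n+1)}{2}-\frac{i(i-1)}{2}$. This yields $\frac{n(n+1)}{2}H_n-\frac{1}{2}\sum_{i=1}^{n}(i-1)=\frac{n(n+1)}{2}H_n-\frac{n(n-1)}{4}$.

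There is no genuine obstacle here. The only care needed is the bookkeeping of the constant terms in the inductive step, and the direct summation-swap computation serves as a cross-check.
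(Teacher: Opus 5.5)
Your proof is correct; both the induction and the interchange-of-summation argument check out line by line. The paper itself gives no proof of this lemma. It cites Eq.~(6.68) of Graham--Knuth--Patashnik, where the identity is stated as $\sum_{0\le k<n}kH_k=\frac{n(n-1)}{2}\bigl(H_n-\frac12\bigr)$ and is obtained by summation by parts. Matching that to the form stated here needs the shift $n\mapsto n+1$ followed by \eqref{eq:harmonic-recurrence}, a step the paper leaves implicit.

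Your two arguments do different jobs. The induction verifies exactly the stated form in a self-contained way, but it needs the closed form in advance. The interchange of summation actually derives the closed form, and it shows where the correction term comes from: $-\frac{n(n-1)}{4}=-\frac12\sum_{i=1}^{n}(i-1)$.

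The interchange argument also generalizes as well as summation by parts does. Replace $k$ by $\binom{k}{m}$ and use $\sum_{k=i}^{n}\binom{k}{m}=\binom{n+1}{m+1}-\binom{i}{m+1}$ together with $\frac1i\binom{i}{m+1}=\frac{1}{m+1}\binom{i-1}{m}$. This gives $\sum_{k=1}^{n}\binom{k}{m}H_k=\binom{n+1}{m+1}H_n-\frac{1}{m+1}\binom{n}{m+1}$. The case $m=0$ recovers \cref{lemma:sumHk}, and $m=1$ gives the present lemma.
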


\begin{lemma}\label{lemma:sumH2k}
	The following equality holds:
	\begin{equation*}
		\sum_{k=1}^{n}H_{2k}=\frac{2n+1}{2}H_{2n}+\frac{1}{4}H_n-n.
	\end{equation*}
\end{lemma}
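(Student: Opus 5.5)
Since $H_{2k}=\sum_{m=1}^{2k}1/m$, the identity of \cref{lemma:sumH2k} only involves harmonic numbers, so the plan is to prove it by induction on $n$ using the recurrence \eqref{eq:harmonic-recurrence}. For $n=1$ the left-hand side is $H_2=3/2$. The right-hand side is $\tfrac{3}{2}H_2+\tfrac14H_1-1=\tfrac94+\tfrac14-1=\tfrac32$, so the base case holds.

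For the inductive step, assume the formula for $n$ and add $H_{2n+2}$ to both sides. The right-hand side for $n+1$ is
\begin{equation*}
	\frac{2n+3}{2}H_{2n+2}+\frac14 H_{n+1}-(n+1).
\end{equation*}
I will express all quantities in terms of $H_{2n}$ and $H_n$ via
\begin{equation*}
	H_{2n+2}=H_{2n}+\frac{1}{2n+1}+\frac{1}{2n+2},\qquad H_{n+1}=H_n+\frac{1}{n+1}.
\end{equation*}
It then suffices to check that
\begin{equation*}
	\frac{2n+1}{2}H_{2n}+\frac14H_n-n+H_{2n+2}
\end{equation*}
equals the displayed expression. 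After substitution, the coefficients of $H_{2n}$ agree, namely $\tfrac{2n+1}{2}+1=\tfrac{2n+3}{2}$, and so do those of $H_n$. What remains is the rational identity
\begin{equation*}
	\Bigl(\frac{1}{2n+1}+\frac{1}{2n+2}\Bigr)\Bigl(\frac{2n+3}{2}-1\Bigr)+\frac{1}{4(n+1)}=1.
\end{equation*}
It holds because $\tfrac12\bigl(1+\tfrac{2n+1}{2n+2}\bigr)+\tfrac{1}{4(n+1)}=\tfrac12+\tfrac{2n+1}{4(n+1)}+\tfrac{1}{4(n+1)}=1$.

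As an alternative route, one could write $H_{2k}=\sum_{m\le 2k}1/m$, swap the order of summation, and split into even and odd $m$ to use \cref{lemma:sumHk}. The induction is cleaner, however. The only potential obstacle is the bookkeeping in this last rational identity, and it is elementary.
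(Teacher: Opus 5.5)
Your induction is correct: the base case checks out, the coefficients of $H_{2n}$ and $H_n$ match after substituting $H_{2n+2}=H_{2n}+\frac{1}{2n+1}+\frac{1}{2n+2}$ and $H_{n+1}=H_n+\frac{1}{n+1}$, and the residual identity $\frac{2n+1}{2}\bigl(\frac{1}{2n+1}+\frac{1}{2n+2}\bigr)+\frac{1}{4(n+1)}=1$ holds. The paper takes a different route and does not use induction. It writes $S=\sum_{k=1}^{n}H_{2k}$ as $\sum_{k=1}^{2n}H_k-\sum_{k=1}^{n}H_{2k-1}$ and evaluates the first sum as $(2n+1)H_{2n}-2n$ by \cref{lemma:sumHk}. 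It then uses $H_{2k-1}=H_{2k}-\frac{1}{2k}$ to turn the odd-index sum into $S-\frac12H_n$. Since $S$ now appears on both sides, it solves the linear equation $2S=(2n+1)H_{2n}-2n+\frac12H_n$. That route derives the closed form rather than verifying a known one. It also shows where the $\frac14H_n$ term comes from: half of the even/odd correction $\sum_{k=1}^{n}\frac{1}{2k}$. The same splitting trick is reused verbatim, together with \cref{lemma:sumkHk}, for \cref{lemma:sum2kH2k}. Your argument, by contrast, is self-contained: it needs only the recurrence \eqref{eq:harmonic-recurrence} and not \cref{lemma:sumHk}. Its cost is that it presupposes the formula, which is harmless here since the formula is given; the same mechanical check would also prove \cref{lemma:sum2kH2k}.
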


\begin{proof}
	Let $S$ be the sum in the lemma. Note that
	\begin{equation}\label{eq:aux-1}
		S=\sum_{k=1}^{n}H_{2k}=\sum_{k=1}^{2n}H_{k}-\sum_{k=1}^{n}H_{2k-1}=(2n+1)H_{2n}-2n-\sum_{k=1}^{n}H_{2k-1},
	\end{equation}
	where we have used \cref{lemma:sumHk}. Since
	\begin{equation*}
		H_{2k-1}=H_{2k}-\frac{1}{2k},
	\end{equation*}
	we have
	\begin{equation}\label{eq:aux-2}
		\sum_{k=1}^{n}H_{2k-1}=\sum_{k=1}^{n}\biggl(H_{2k}-\frac{1}{2k}\biggr)=\sum_{k=1}^{n}H_{2k}-\frac{1}{2}H_n=S-\frac{1}{2}H_n.
	\end{equation}
	Then, from \eqref{eq:aux-1} and \eqref{eq:aux-2} we obtain
	\begin{equation*}
		S=(2n+1)H_{2n}-2n-S+\frac{1}{2}H_n,
	\end{equation*}
	and therefore
	\begin{equation*}
		S=\frac{2n+1}{2}H_{2n}+\frac{1}{4}H_n-n.\qedhere
	\end{equation*}
\end{proof}

\begin{lemma}\label{lemma:sum2kH2k}
	The following equality holds:
	\begin{equation*}
		\sum_{k=1}^{n}2kH_{2k}=(n+1/2)^2H_{2n}-\frac{1}{8}H_n-\frac{n(2n-1)}{4}.
	\end{equation*}
\end{lemma}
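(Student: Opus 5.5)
We mimic the proof of \cref{lemma:sumH2k}: reduce the sum over even indices to a full sum over $1\le k\le 2n$, where \cref{lemma:sumkHk} applies, and correct for the odd indices. Write $T=\sum_{k=1}^{n}2kH_{2k}$. By \cref{lemma:sumkHk} with $2n$ in place of $n$,
\begin{equation*}
	\sum_{k=1}^{2n}kH_k=n(2n+1)H_{2n}-\frac{2n(2n-1)}{4}=T+\sum_{k=1}^{n}(2k-1)H_{2k-1}.
\end{equation*}
So it suffices to express the odd-index sum in terms of $T$ and simpler quantities.

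For the odd terms we use $H_{2k-1}=H_{2k}-\frac{1}{2k}$. This gives
\begin{equation*}
	(2k-1)H_{2k-1}=(2k-1)H_{2k}-\frac{2k-1}{2k}=2kH_{2k}-H_{2k}-1+\frac{1}{2k}.
\end{equation*}
Summing over $1\le k\le n$ yields
\begin{equation*}
	\sum_{k=1}^{n}(2k-1)H_{2k-1}=T-\sum_{k=1}^{n}H_{2k}-n+\frac{1}{2}H_n.
\end{equation*}
The middle sum is given exactly by \cref{lemma:sumH2k}, namely $\frac{2n+1}{2}H_{2n}+\frac14H_n-n$. Substituting, the odd-index sum becomes
\begin{equation*}
	T-\frac{2n+1}{2}H_{2n}+\frac14H_n.
\end{equation*}

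Plugging this into the first identity gives
\begin{equation*}
	n(2n+1)H_{2n}-\frac{n(2n-1)}{2}=2T-\frac{2n+1}{2}H_{2n}+\frac14H_n.
\end{equation*}
Solving for $T$,
\begin{equation*}
	T=\frac{(2n+1)^2}{4}H_{2n}-\frac18H_n-\frac{n(2n-1)}{4},
\end{equation*}
using $n(2n+1)+\frac{2n+1}{2}=\frac{(2n+1)^2}{2}$. Since $\frac{(2n+1)^2}{4}=(n+1/2)^2$, this is exactly the claimed identity.

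The only delicate point is the bookkeeping of the constant terms: the $-n$ from the odd correction cancels against the $-n$ inside \cref{lemma:sumH2k}. As a sanity check, at $n=1$ the left side is $2H_2=3$, and the right side is $\frac94\cdot\frac32-\frac18-\frac14=3$.
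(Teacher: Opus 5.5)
Your proposal is correct and follows essentially the same route as the paper: split $\sum_{k=1}^{2n}kH_k$ into even and odd indices, rewrite the odd-index sum via $H_{2k-1}=H_{2k}-\frac{1}{2k}$ and \cref{lemma:sumH2k}, and solve the resulting linear equation for the even-index sum. Your bookkeeping, including the cancellation of the two $-n$ terms, agrees with the paper's proof.
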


\begin{proof}
	Let $S$ be the sum in the lemma. Note that
	\begin{align*}
		S&=\sum_{k=1}^{2n}kH_{k}-\sum_{k=1}^{n}(2k-1)H_{2k-1}\\
		&=n(2n+1)H_{2n}-\frac{n(2n-1)}{2}-\sum_{k=1}^{n}(2k-1)H_{2k-1},
	\end{align*}
	where we have used \cref{lemma:sumkHk}. Using \eqref{eq:harmonic-recurrence} and \cref{lemma:sumHk,lemma:sumH2k}, we have
	\begin{align*}
		\sum_{k=1}^{n}(2k-1)H_{2k-1}&=\sum_{k=1}^{n}(2k-1)\biggl(H_{2k}-\frac{1}{2k}\biggr)\\
		&=\sum_{k=1}^{n}2kH_{2k}-\sum_{k=1}^{n}H_{2k}-n+\sum_{k=1}^{n}\frac{1}{2k}\\
		&=S-\sum_{k=1}^{n}H_{2k}-n+\frac{1}{2}H_n\\
		&=S-\biggl(\frac{2n+1}{2}H_{2n}+\frac{1}{4}H_n-n\biggr)-n+\frac{1}{2}H_n\\
		&=S-\frac{2n+1}{2}H_{2n}+\frac{1}{4}H_n.
	\end{align*}
	Hence,
	\begin{equation*}
		S=n(2n+1)H_{2n}-\frac{n(2n-1)}{2}-\biggl(S-\frac{2n+1}{2}H_{2n}+\frac{1}{4}H_n\biggr),
	\end{equation*}
	and therefore
	\begin{equation*}
		S=(n+1/2)^2H_{2n}-\frac{1}{8}H_n-\frac{n(2n-1)}{4}.\qedhere
	\end{equation*}
\end{proof}

	
 \enlargethispage{\baselineskip}
\bibliographystyle{amsplain}

\end{document}